\newcommand{\R}{\mathbb{R}}
\newcommand{\N}{\mathbb{N}}
\newcommand{\C}{\mathbb{C}}
\numberwithin{equation}{section}
\theoremstyle{plain}
\newtheorem{theorem}{Theorem}[section] % numerazione dipendente dal capitolo
\newtheorem{proposition}[theorem]{Proposition} % numerazione dipendente dai teoremi
\newtheorem{lemma}[theorem]{Lemma}
\newtheorem{corollary}[theorem]{Corollary}
\theoremstyle{definition}
\newtheorem{remark}[theorem]{Remark}
\DeclarePairedDelimiter{\abs}{\lvert}{\rvert}
\DeclarePairedDelimiter{\norm}{\lVert}{\rVert}
\DeclarePairedDelimiter{\duality}{\langle}{\rangle}
\DeclareMathOperator{\diag}{diag}
\newcommand{\eps}{\varepsilon}
\renewcommand{\phi}{\varphi}
\renewcommand{\bar}{\overline}
\renewcommand{\vec}{\boldsymbol}
\def\genspazio #1#2#3#4#5{#1^{#2}(#5,#4;#3)}
\def\spazio #1#2#3{\genspazio {#1}{#2}{#3}T0}
\def\LT {\spazio L}
\def\HT {\spazio H}
\def\C #1#2{\mathcal{C}^{#1}([0,T];#2)}
\def\Lx #1{L^{#1}(\Omega)}
\def\Lt #1{L^{#1}(0,T)}
\def\Lqt #1{L^{#1}(Q_T)}
\def\Hx #1{H^{#1}(\Omega)}
\def\Huo {H^1_0(\Omega)}
\def\ls{<}
\def\gs{>}
\def\mezzo {\frac{1}{2}}
\def\ddt {\frac{\de}{\de t}}
\def\de {\mathrm{d}}
\def\D {\mathrm{D}}
\def\n {\vec{n}}
\def\phib {\bar{\phi}}
\def\pb {\bar{p}}
\def\sigmab {\bar{\sigma}}
\def\hh {\mathbbm{h}}
\def\Iad {\mathcal{I}_{\text{ad}}}
\def\XX {\mathbb{X}}
\def\YY {\mathbb{Y}}
\def\HH {\mathbb{H}}
\def\VV {\mathbb{V}}
\def\vpsi {\vec{\uppsi}}
\def\vD {\vec{D}}
\def\vy {\vec{Y}}
\def\vh {\vec{h}}
\def\tb {\bar{t}}
\def\ts {t^\star}
\def\radice {\sqrt{\abs{\log \eps}}}
\def\Cb {\bar{C}}
\def\Qb {\bar{Q}}
\def\Rcal {\mathcal{R}}
\def\triplein {(\phi_0, \sigma_0, p_0)}
\def\Ccal {\mathcal{C}}
\def\phimeas {\phi_{\text{meas}}}
\def\sigmameas {\sigma_{\text{meas}}}
\def\pmeas  {p_{\text{meas}}}
\begin{document}
	
	\begin{center}
		
		% \LARGE{\textbf{Inverse identification of initial data for a \\ prostate cancer growth model - PART I}}

            \Large{\textbf{Mathematical analysis of a model-constrained inverse problem for the reconstruction of early states of prostate cancer growth }}  
		
		\vskip0.35cm
		
		\large{\textsc{Elena Beretta$^1$}} \\
		\normalsize{e-mail: \texttt{eb147@nyu.edu}} \\
		\vskip0.35cm
		
		\large{\textsc{Cecilia Cavaterra$^2$}} \\
		\normalsize{e-mail: \texttt{cecilia.cavaterra@unimi.it}} \\
		\vskip0.35cm
		
		\large{\textsc{Matteo Fornoni$^3$}}\\
		\normalsize{e-mail: \texttt{matteo.fornoni01@universitadipavia.it}} \\
		\vskip0.35cm

       \large{\textsc{Guillermo Lorenzo$^4$}} \\
		\normalsize{e-mail: \texttt{guillermo.lorenzo.gomez@sergas.es}} \\
		\vskip0.35cm
		
		\large{\textsc{Elisabetta Rocca$^5$}} \\
		\normalsize{e-mail: \texttt{elisabetta.rocca@unipv.it}} \\
		\vskip0.35cm
		
		\normalsize{$^1$Division of Science, NYU Abu Dhabi}
		\vskip0.1cm
		
		\normalsize{$^2$Department of Mathematics ``F. Enriques'', University of Milan \& IMATI-C.N.R.}
		\vskip0.1cm
		
		\normalsize{$^3$Department of Mathematics ``F. Casorati'', University of Pavia}
		\vskip0.1cm

        \normalsize{$^4$Health Research Institute of Santiago de Compostela, Spain \& Oden Institute for Computational Engineering and Sciences, The University of Texas at Austin, USA}
	\vskip0.1cm
	
        \normalsize{$^5$Department of Mathematics ``F. Casorati'', University of Pavia \& IMATI-C.N.R.}
		\vskip0.5cm
		
	\end{center}

	\begin{abstract}\noindent
         The availability of cancer measurements over time enables the personalised assessment of tumour growth and therapeutic response dynamics. 
         %These longitudinal data contribute to early characterise crucial clinical endpoints, such as progression to higher-risk disease and treatment failure. 
         However, many tumours are treated after diagnosis without collecting longitudinal data, and cancer monitoring protocols may include infrequent measurements. 
         %These situations hinder disease prognosis and may lead to treatment excesses and deficiencies that can respectively affect the patients’ quality of life and life expectancy. 
         To facilitate the estimation of disease dynamics and better guide ensuing clinical decisions, we investigate an inverse problem enabling the reconstruction of earlier tumour states by using a single spatial tumour dataset and a biomathematical model describing disease dynamics. 
         We focus on prostate cancer, since aggressive cases of this disease are usually treated after diagnosis.
         We describe tumour dynamics with a phase-field model driven by a generic nutrient ruled by reaction-diffusion dynamics. 
         The model is completed with another reaction-diffusion equation for the local production of prostate-specific antigen, which is a key prostate cancer biomarker.
         We first improve previous well-posedness results by further showing that the solution operator is continuously Fréchet differentiable. 
         We then analyse the backward inverse problem concerning the reconstruction of earlier tumour states starting from measurements of the model variables at the final time. 
         Since this problem is severely ill-posed, only very weak conditional stability of logarithmic type can be recovered from the terminal data. 
         However, by restricting the unknowns to a compact subset of a finite-dimensional subspace, we can derive an optimal Lipschitz stability estimate. 
         %Such results then lead to the development of a locally convergent iterative reconstruction algorithm based on the Landweber scheme. Future work will investigate the proposed reconstruction methods using numerical simulations.
         
		\vskip3mm
		
		\noindent {\bf Key words:} phase field; non-linear parabolic system; inverse problems; well-posedness; mathematical oncology; prostate cancer.
		
		\vskip3mm
		
		\noindent {\bf AMS (MOS) Subject Classification: 
        35K51, % Initial-boundary value problems for second-order parabolic systems
        35K58 % Semilinear parabolic equations
        35R30,  % Inverse problems for PDEs
        35Q92, % PDEs in connection with biology, chemistry and other natural sciences 
        92C50 % Medical applications
        } 
		
	\end{abstract}

% Headings

\pagestyle{fancy}
\fancyhf{}	
\fancyhead[EL]{\thepage}
\fancyhead[ER]{\textsc{Beretta -- Cavaterra -- Fornoni -- Lorenzo -- Rocca}} %authors
\fancyhead[OL]{\textsc{Reconstructing early states of prostate cancer}} % running title
\fancyhead[OR]{\thepage}

\renewcommand{\headrulewidth}{0pt}
\setlength{\headheight}{5mm}

\thispagestyle{empty} % first page without heading

\section{Introduction}

The mathematical modelling of cancer growth and therapeutic response has enabled the investigation of the biophysical mechanisms underlying the development and treatment of these diseases, as well as the prediction of personalised outcomes to guide clinical decision-making \cite{yin2019review,Lorenzo2022_review}.
Personalised tumour forecasting requires patient-specific measurements of cancer growth and treatment response, such as biomarker, imaging, histopathological and omics data \cite{Lorenzo2022_review,kazerouni2020integrating}.
These measurements enable the calibration of model parameters, the definition of the tumour and host tissue geometry, and the initialisation of model variables (e.g., those describing tumour and other cell species as well as key substances controlling their dynamics) \cite{yin2019review,Lorenzo2022_review,kazerouni2020integrating}.
%These measurements enable the calibration of the parameters in the model, the definition of the geometry and architecture of the host tissue, and the initialisation of the model variables describing the tumour, other relevant cell types and key substances controlling its dynamics \cite{yin2019review,Lorenzo2022_review,kazerouni2020integrating}.
Towards these ends, data are usually required at two or more time points during the monitoring of the disease before, during and after treatment \cite{Lorenzo2022_review,Lorenzo2024,Hormuth2021,wu2022mri}.
The collection of these data over time contributes to early characterise crucial clinical endpoints (e.g., progression to higher-risk disease, treatment failure, survival) by both analysing the relative change of these measurements \cite{giganti2021natural,tudorica2016early} as well as by using them to obtain more accurate tumour forecasts  \cite{Lorenzo2022_review,Lorenzo2024,Hormuth2021,wu2022mri}. 
However,  depending on the natural history and specific clinical management protocols for each type of cancer, it may not be possible to obtain tumour measurements until the tumour has developed sufficiently to produce symptoms or be detected with standard-of-care screening methods \cite{Lorenzo2022_review,kazerouni2020integrating,chaudhuri2023predictive,Lorenzo2024}.
Furthermore, tumour monitoring strategies may not enable to assess the tumour status as often as necessary to capture tumour growth and treatment response in detail \cite{Lorenzo2022_review,kazerouni2020integrating,chaudhuri2023predictive,Lorenzo2024}.
These situations may lead to limited accuracy in the estimation of disease prognosis and result in treatment excesses and deficiencies that can respectively affect the patients’ quality of life and life expectancy \cite{gupta2022systemic,ziu2020role,neal2020ten}.

\begin{figure}[t]
    \centering
    \includegraphics[width=1\linewidth]{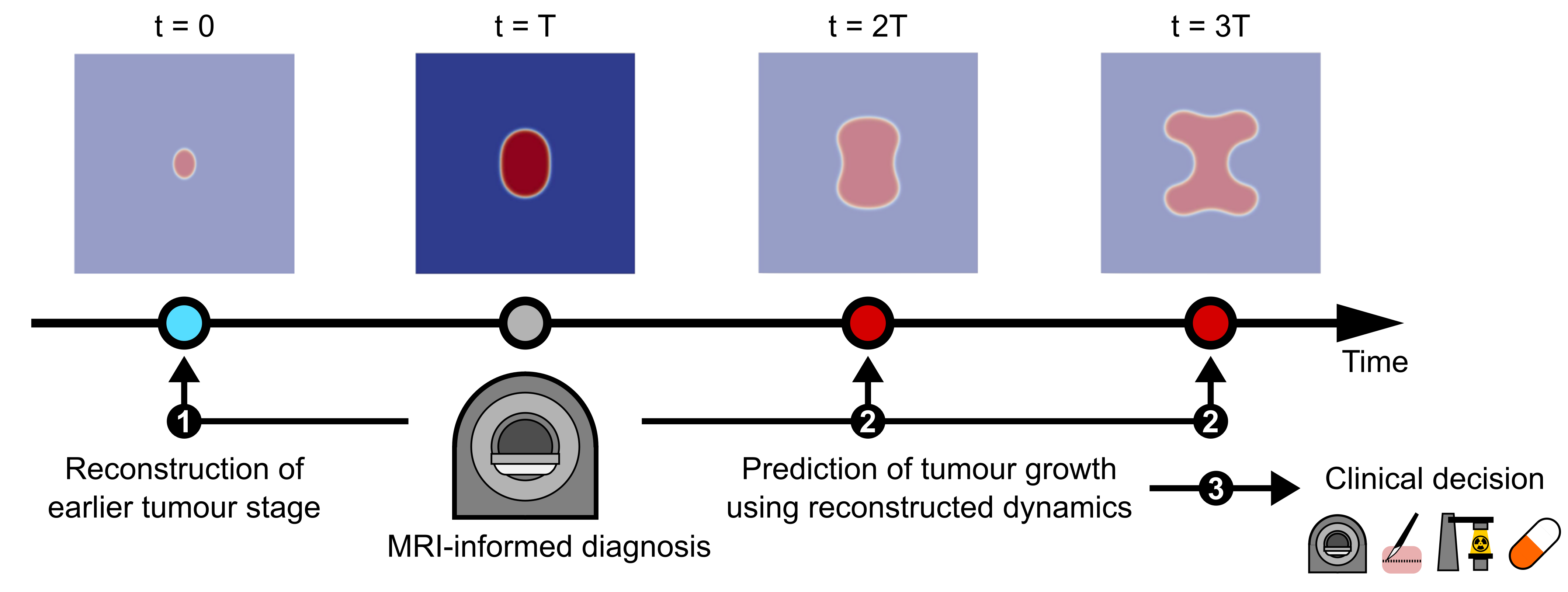}
    \caption{\textbf{Reconstruction of earlier prostate cancer states for clinical decision-making.} This figure illustrates the potential application of reconstructed prostate cancer dynamics from a diagnostic medical imaging dataset (e.g., MRI), while using a mathematical model describing prostate cancer growth. The first step (1) consists of estimating an earlier tumour state for the tumour within a clinically reasonable time, for example, six to twelve months in the context of monitoring prostate cancer or decision of first-line treatment. This is the step in which we focus on the present work. Then, a second step (2) leverages the reconstructed tumour growth dynamics to make a forecast within comparable time horizons to the one used in the reconstruction of earlier states. In the third and final step (3), these predictions are analysed to inform clinical decisions, for instance, whether to schedule another medical imaging scan to monitor lower-risk disease or to prescribe immediate treatment due to anticipated worsening of the disease in the short-term future (e.g., surgery, radiotherapy, or drug-based therapies).}
    \label{fig:globscheme}
\end{figure}

After the acquisition of a medical imaging measurement to characterise the tumour morphology (e.g., at diagnosis, after treatment), an inverse problem leveraging an adequate mathematical model to represent tumour dynamics and aiming at identifying the tumour status at an earlier date can shed light on the patient-specific tumour dynamics \cite{Lorenzo2022_review,SSMB2020,chaudhuri2023predictive}.
This reconstruction can then be used to better estimate potential clinical outcomes that guide crucial decision-making by leveraging tumour forecasts or model-based biomarkers.
Figure~\ref{fig:globscheme} illustrates the overall procedure whereby a spatial measurement of a tumour obtained \emph{via} medical imaging at the time of diagnosis is used to reconstruct an earlier tumour state and better inform the ensuing management of the disease.
Indeed, the determination of the spatial region where a tumour originates can be of clinical relevance for some tumour types, such as brain and prostate cancers \cite{ali2024tale, jungk2019location}.
Additionally, the need to estimate earlier states of tumour development can also arise in a pre-clinical context. For example, it may be necessary to determine the configuration of an \emph{in vitro} cell colony or the tumour morphology in an \emph{in vivo} animal model before the onset of experimental measurements or due to sparse data collection with important tumour dynamics between consecutive data points.

In this work, we analyse a model-constrained inverse problem aiming to reconstruct earlier tumour configurations from a spatial measurement collected, for instance, \emph{via} medical imaging at the time of diagnosis.
We focus on newly detected prostate cancer.
The diagnosis of these tumours relies on a multiparametric magnetic resonance imaging (MRI) scan, which is often motivated by increasing values of the blood biomarker prostate-specific antigen (PSA; the most common biomarker in prostate cancer management) and guides the ensuing biopsy procedure to confirm the disease histopathologically \cite{mottet2021eau,Lorenzo2024}.
The first-line clinical management options for prostate cancer usually include active surveillance for indolent, lower-risk cases and radical treatment (e.g., surgery, radiotherapy with/without androgen deprivation therapy) for aggressive, higher-risk patients \cite{mottet2021eau}.
Thus, the accurate identification of the clinical risk of prostate cancer is a critical need of utmost importance to select an adequate clinical management strategy.
Beyond the diagnostic PSA, MRI, and biopsy data, estimating the recent dynamics of prostate cancer growth before diagnosis can further inform the treating physicians about the best course of management for each particular patient \cite{giganti2021natural,Lorenzo2024}.
Towards this end, we employ a mathematical model of prostate cancer growth based on the phase-field method, which has been studied both from an analytical and a numerical point of view in \cite{CGLMRR2019} and \cite{CGLMRR2021}.
This PDE modelling approach is a common spatiotemporal continuous formulation to describe the time evolution of the location and geometry of a tumour, as well as to address related optimal control problems 
\cite{Cavaterra2019, Colli2015, Colli2017, F2024, Frigeri2015, FLS2021, Garcke2016, Garcke2018, Lorenzo2016, Miranville2019, Frieboes2010, Wise2008, Xu2016}. 
Hence, we define a continuous phase field $\phi$ that takes values $\phi\approx0$ in the host tissue, $\phi\approx1$ in the tumour, and intermediate values in a small neighbourhood of the tumour-healthy tissue interface, which exhibits a smooth yet steep profile.
The tumour phase field dynamics depends on the local availability of a critical nutrient (e.g.,~oxygen or glucose), whose concentration is denoted by $\sigma$ and obeys a reaction-diffusion equation.
Additionally, the model includes another reaction-diffusion equation to represent the local production of tissue PSA $p$, which represents the PSA leaked to the bloodstream per unit volume of prostatic tissue \cite{Lorenzo2016}.
The complete prostate cancer model considered herein reads as follows: 
\begin{alignat}{2}
    & \partial_t \phi - \lambda \Delta \phi + F'(\phi) - m(\sigma) \hh'(\phi) = 0 
    && \hbox{in $Q_T$,} \label{eq:phi0} \\
    & \partial_t \sigma - \eta \Delta \sigma 
    = S_h (1-\phi) + S_c \phi 
    - \left( \gamma_h (1-\phi) + \gamma_c \phi  \right) \sigma
    \qquad && \hbox{in $Q_T$,} \label{eq:sigma0} \\
    & \partial_t p - D \Delta p + \gamma_p p 
    = \alpha_h (1 - \phi)+ \alpha_c \phi 
    && \hbox{in $Q_T$,} \label{eq:p0} \\
    & \phi = 0, \quad \partial_{\n} \sigma = 0, \quad \partial_{\n} p = 0 
    && \hbox{on $\Sigma_T$,} \label{bc0} \\
    & \phi(0) = \phi_0, \quad \sigma(0) = \sigma_0, \quad p(0) = p_0
    && \hbox{in $\Omega$,} \label{ic0}
\end{alignat}
where $\Omega \subset \R^N$, $N=2,3$, is a sufficiently regular open and bounded domain with outward unit normal vector $\n$, $T \gs 0$ is the final time (or time horizon), $Q_T := \Omega \times (0,T)$, and $\Sigma_T = \partial \Omega \times (0,T)$. 
For the inverse problem addressed herein, $t=T$ represents the time of the diagnostic imaging scan while $t=0$ is a previous timepoint at which we want to reconstruct the tumour phase field.

%\elena{Qui prendiamo un coefficiente di diffusione costante ma immagino che la presenza di cellule tumorali alteri il tessuto e quindi anche il coefficiente di diffusione. Giusto? Se cosi' andrebbe scritto qualcosa a riguardo.}  
%\guille{There are models where the diffusion coefficient is inhomogeneous or anisotropic depending on the availability of data to characterise the underlying tissue architecture. In other models, the diffusion coefficient can be dependent on the nutrient, such that mobility is directed towards the sources thereof. Additionally, the diffusion coefficient may also depend on the variable describing the tumour phase. However, if we start commenting modelling possibilities for each mechanism in the model, we would need at least a whole page.}
%\elena{Grazie della risposta.. forse allora basterebbe dire perche' si puo' scegliere il coefficient costante nel modello.}
The diffusion coefficient of the tumour phase in Eq.~\eqref{eq:phi0} can be defined as $\lambda=M\ell^2$, where both $M$ and $\ell$ are positive real constants denoting the tumour mobility and interface width \cite{Xu2016}.
The non-linear functions $F$ and $\hh$ are explicitly defined as:
\begin{align*}
	F(\phi) = M \phi^2 (1-\phi)^2, \qquad \hh(\phi) = M \phi^2 (3 -2 \phi), \qquad \text{with } M \gs 0.
\end{align*}
The non-convex function $F(\phi)$ is a double-well potential, which is typical in phase field modelling and allows for the coexistence of the tumour ($\phi\approx1$) and healthy ($\phi\approx0$) tissues.  
The interpolation function $\hh(\phi)$ is also common in non-conserved dynamics of phase fields and it verifies the properties $\hh(0)=0$ and $\hh(1)=1$ as well as $\hh^\prime(0)=\hh^\prime(1)=0$, where $\hh^\prime$ denotes the derivative of $\hh$. 
The function $m(\sigma)$ is usually called tilting function and, in our model, is defined as
\begin{equation}\label{msigma}
m(\sigma )=m_{ref}\left( \frac{\rho +A}{2}+\frac{\rho -A}{\pi }\arctan \left( \frac{\sigma -\sigma _{l}}{\sigma _{r}}\right) \right),
\end{equation}
where $m_{ref}$ is a positive constant that scales the strength of the tilting function within our phase field framework, while $\rho$ and $A$ are constant proliferation and apoptosis indices, respectively. These non-dimensional parameters are defined upon the proliferation and apoptosis rates in tumour tissue as follows:
\begin{equation*}
\rho=\frac{K_\rho}{\bar{K_\rho}}, 
\quad \text{and} \quad 
A=-\frac{K_A}{\bar{K_A}},
\end{equation*}
where $K_\rho$ is the proliferation rate of tumour cells, $K_A$ is the apoptosis rate of tumour cells, $\bar{K_\rho}$ is a scaling reference value for the proliferation rate, and $\bar{K_A}$  is a scaling reference value for the apoptosis rate. 
Thus, according to the definition of $m(\sigma)$ provided above, this function represents the nutrient-dependent net proliferation rate of the tumour.
Additionally, the constants $\sigma_r$ and $\sigma_l$ in Eq.~\eqref{msigma} denote a reference and a threshold value for the nutrient concentration\cite{Xu2016}.
Therefore, nutrient concentrations lower than $\sigma_l$ render the healthy tissue energetically more favorable than the tumour tissue and vice-versa. 
Moreover, when $|m(\sigma)|<1/3$, the function $G(\phi,\sigma)=F(\phi)-m(\sigma)\hh(\phi)$ is still a double-well potential presenting two local minima at $\phi=0$ and $\phi=1$ \cite{CGLMRR2019}.
Within this range, low nutrient concentrations result in a lower energy level (i.e., the value of $G$) in the healthy tissue ($\phi=0$) than in the tumoral tissue ($\phi=1$). The opposite is true if we consider high values of the nutrient concentration $\sigma$. The described behaviours of $m(\sigma)$ and $G(\phi,\sigma)$ are further illustrated in Figure \ref{functionplots}.

\begin{figure}[t]
    \begin{subfigure}[t]{0.32\textwidth}
      \includegraphics[width=\textwidth]{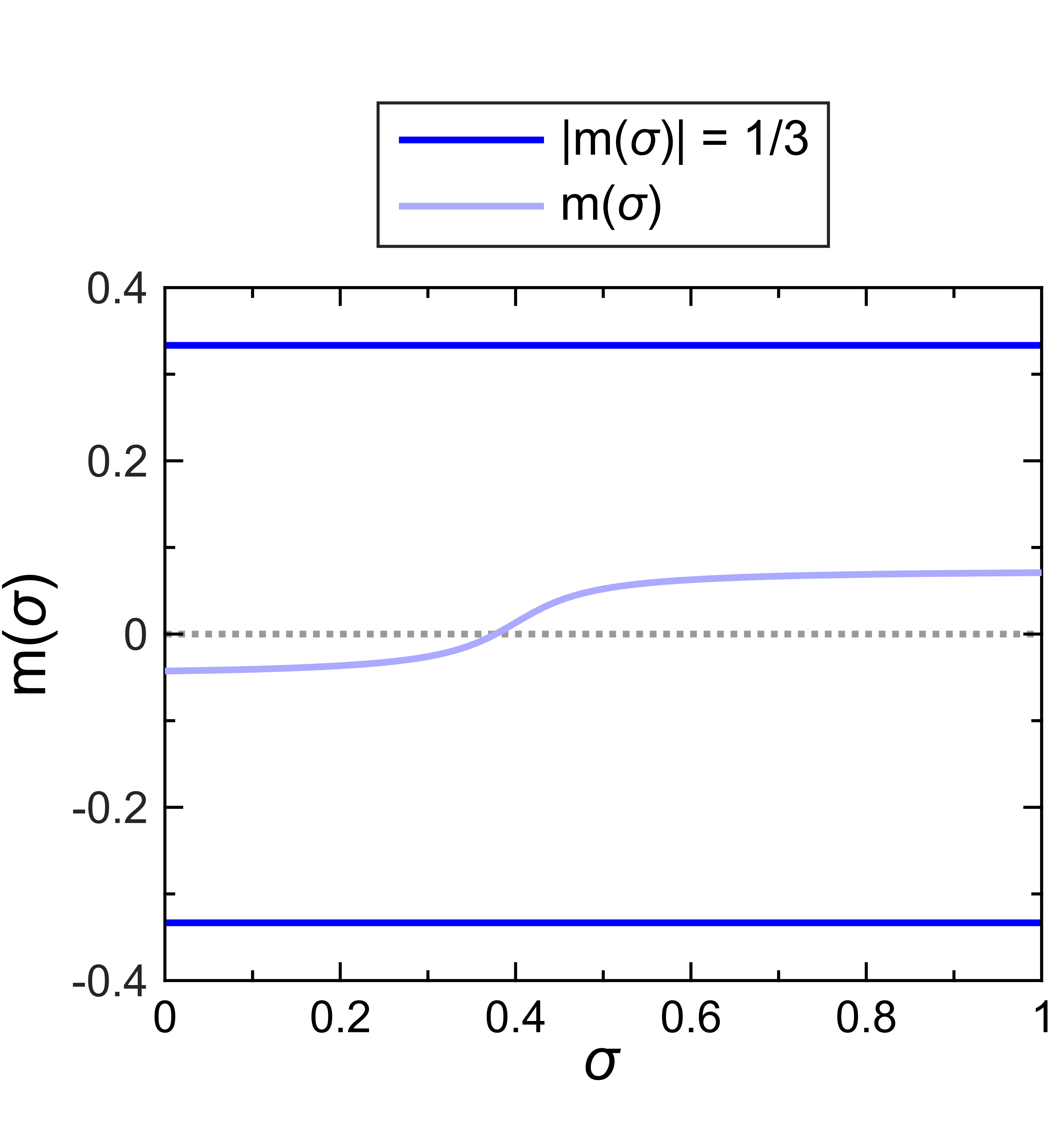}
      \caption{}
    \end{subfigure}
    \hfill
    \begin{subfigure}[t]{0.32\textwidth}
      \includegraphics[width=\textwidth]{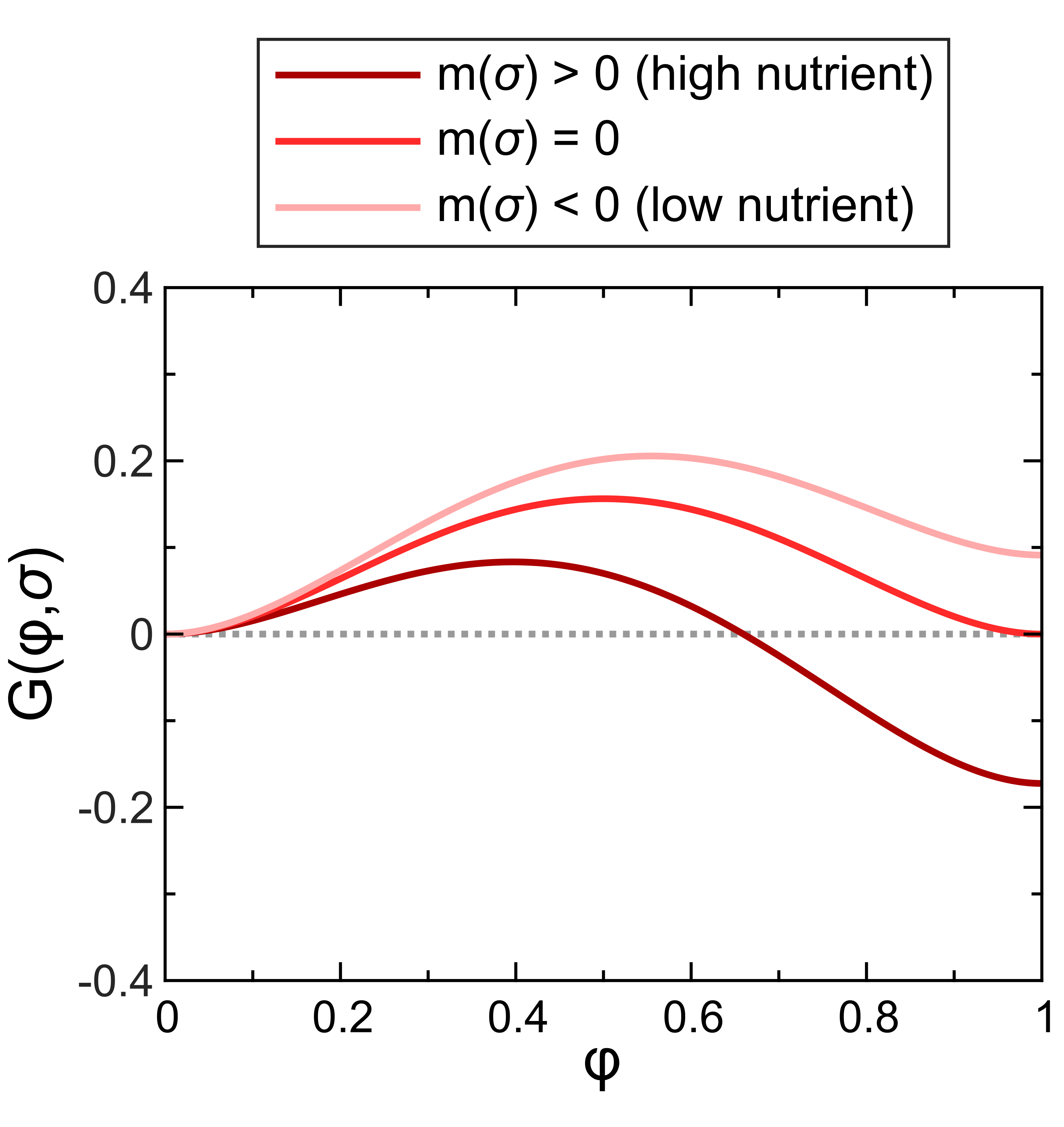}
      \caption{}
    \end{subfigure}
    \hfill
    \begin{subfigure}[t]{0.32\textwidth}
      \includegraphics[width=\textwidth]{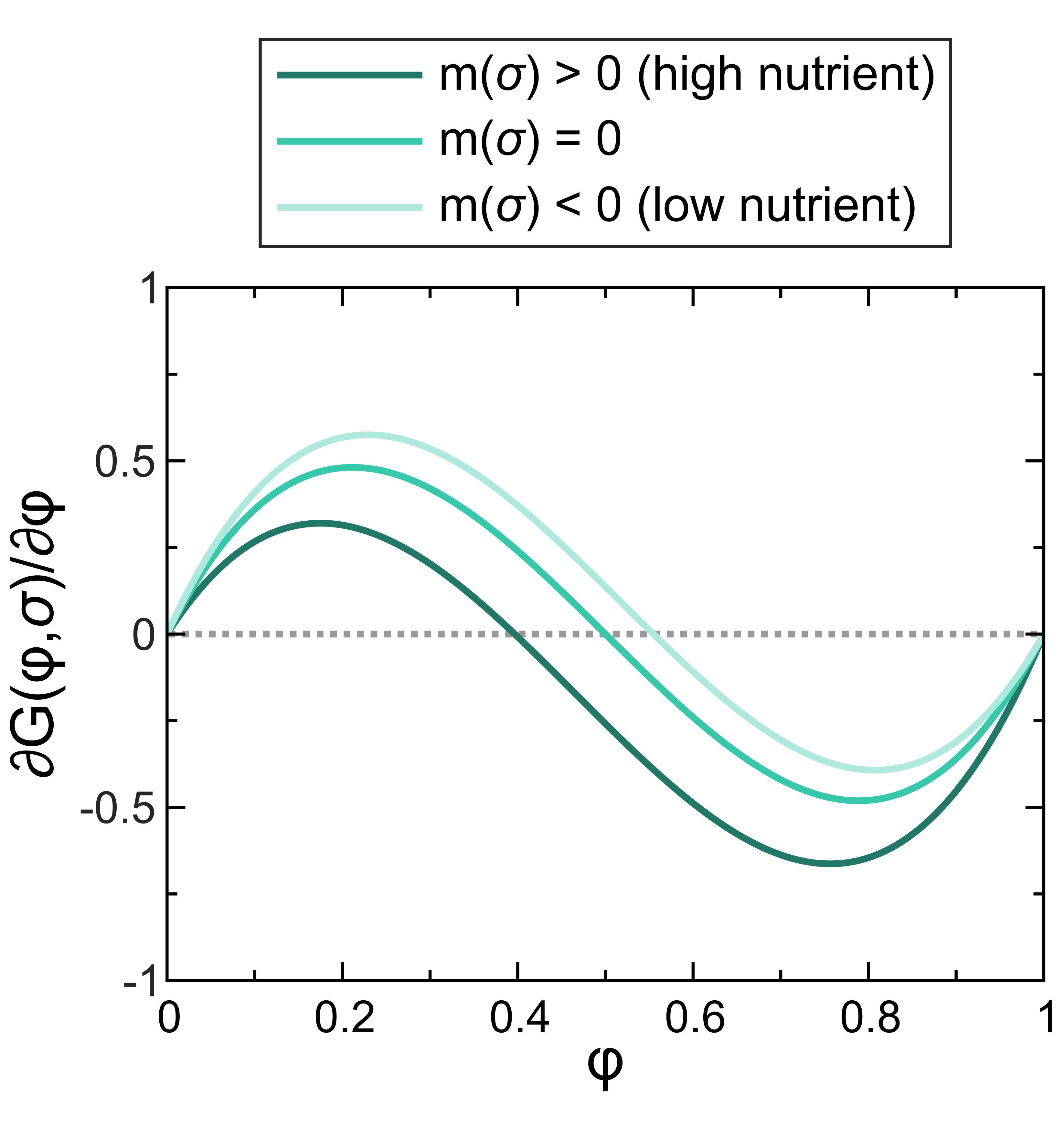}
      \caption{}
    \end{subfigure}
    \hfill
\vspace*{8pt}
\caption{\textbf{Nonlinear terms in the phase field model of prostate cancer growth.}
Panel (a) depicts the tilting function $m(\sigma)$, which represents the net proliferation, with respect to the local nutrient $\sigma$.
This plot shows that proliferation is supported in regions with higher nutrient concentrations, while tumour cell death dominates in areas where the nutrient is scarcer.
Panel (b) shows the potential $G\left(\phi,\sigma\right)= F(\phi) - m(\sigma) \hh(\phi)$ with respect to the values of the tumour phase field $\phi$ in three situations: a high nutrient availability that fosters proliferation ($m(\sigma)>0)$, limited nutrient availability that inhibits proliferation ($m(\sigma)=0$), and a scarce nutrient concentration that promotes tumour cell death ($m(\sigma)<0$). 
For the same three scenarios, panel (c) further provides the derivative $\partial G\left(\phi,\sigma\right)/\partial\phi = F'(\phi) - m(\sigma) \hh'(\phi) $, which explicitly drives the spatiotemporal dynamics of the tumour phase field in Eq.~\eqref{eq:phi}.
The plots in panels (b) and (c) show that a high nutrient concentration energetically favours tumour growth within our modelling framework, whereas a low nutrient environment energetically impedes it.}
\label{functionplots}
\end{figure}

In the nutrient dynamics equation (Eq.~\eqref{eq:sigma0}), $\eta$ denotes the diffusion coefficient of the nutrient, $S_h$ and $S_c$ stand for the non-negative nutrient supply rates in the healthy and tumour tissue, and $\gamma_h$ and $\gamma_c$ are positive constants that represent the nutrient uptake rate in the healthy and cancerous tissue.  
Additionally, the parameters governing the tissue PSA equation (Eq.~\ref{eq:p0}) are the diffusion coefficient $D$, the rates of production of tissue PSA by the healthy tissue $\alpha_h$ and the tumour $\alpha_c$, and the natural decay rate $\gamma_p$. 
To conclude the exposition of the model, we choose standard no-flux boundary conditions for $\sigma$ and $p$, while we set zero-valued Dirichlet boundary conditions for $\phi$.
This last assumption implies that the tumour is confined inside the prostate (recall indeed that $\phi \approx 0$ represents the healthy phase), which is the case of the majority of newly-diagnosed prostate cancer cases \cite{mottet2021eau}.
Furthermore, the only difference with respect to the original model presented in \cite{CGLMRR2019} is that we have neglected the terms related to cytotoxic and antiangiogenic treatments.
The rationale for this choice is that here we focus on reconstructing early stages of prostate cancer growth before the time of first diagnosis using imaging data and, thus, before the administration of any treatment \cite{Lorenzo2024,mottet2021eau}. 
The interested reader is referred to \cite{CGLMRR2019,CGLMRR2021} for a more detailed presentation of the model equations, parameters, and alternative definitions (e.g., further mechanisms to include in the model, spatially-varying parameters, nonlinearities).

Mathematically, the problem we want to address is a backward inverse problem for a nonlinear system of parabolic equations.
Such kind of problems arise any time a diffusion process has to be reversed thereby governing a multitude of applications \cite{Arratia2021, JBJA2019, SSMB2020, Engl1995, Akcelik2006}. 
To be precise, we call $\mathcal{R}: (\phi_0, \sigma_0, p_0) \mapsto (\phi(T), \sigma(T), p(T))$ the nonlinear solution operator which associates to any initial data $(\phi_0, \sigma_0, p_0)$ the value of the solution at the final time $(\phi(T), \sigma(T), p(T))$. 
Then, given some measurements $\phimeas, \sigmameas, \pmeas: \Omega \to \R$, we can formulate our inverse problem as that of finding initial data $\phi_0$, $\sigma_0$, $p_0$ such that the solution $(\phi, \sigma, p)$ to \eqref{eq:phi0}--\eqref{ic0} satisfies $\phi(T) = \phimeas$, $\sigma(T) = \sigmameas$ and $p(T) = \pmeas$. 
The backward inverse problem, even in the linear case, is well-known to be severely ill-posed with very weak conditional logarithmic stability from the data \cite{Payne75, LRS, isakov}. 
Therefore, as a preliminary step, it is of paramount importance to regularise the problem,  by establishing physically relevant a priori assumptions on the unknown initial data that lead to better, possibly  Lipschitz, dependence of the initial data on the measurements. 
In turn, such stability estimates give local convergence of iterative reconstruction methods \cite{DQS2012}. 
This strategy has been successfully employed in addressing other non-linear severely ill-posed problems, for example in the context of the conductivity inverse problem, see for example \cite{AV2005, BF2011, ABFV2022}. 
The key idea is that, by restricting the unknowns to a compact subset of a finite-dimensional subspace, we can derive Lipschitz dependence, via a quantitative version of the inverse map theorem in Banach spaces \cite[Proposition 5]{BV2006}.
To accomplish this, we need to prove the injectivity and continuity of the inverse map, as well as Fr\'echet differentiability with continuity of the forward map. 
Indeed, even though the system is non-linear, we can adapt logarithmic convexity methods to prove injectivity and a conditional stability estimate for the inverse map for any positive time $t \gs 0$ (Proposition \ref{prop:condstab}). Additionally, through a more refined analysis, we are also able to recover a logarithmic stability estimate for the reconstruction of the initial data (Theorem \ref{thm:inizstab}).
We just mention that the log-convexity method was already employed for example in \cite{Hao2011} to derive stability estimates for linear parabolic partial differential equations and in \cite{K2007} in the case of non-linear parabolic equations. 
Carleman estimates for parabolic equations can be also employed to derive uniqueness and stability estimates for the backward inverse problem, see for instance \cite{Y2009, IY1998, IY2014, Vessella2009}.
Then, to prove the additional regularity of $\Rcal$ required by \cite[Proposition 5]{BV2006}, we show finer continuous dependence estimates on the solutions of the system \eqref{eq:phi0}--\eqref{ic0}, that allow us to further get that $\Rcal$ is $\Ccal^1$ (Theorem \ref{thm:frechet}).
As a consequence, we can derive the sought quantitative Lipschitz stability estimate (Theorem \ref{thm:lipstab}), under the a priori assumption that the initial data lie in a compact subset of a finite-dimensional subspace.
In particular, our explicit computation of the stability constants confirms that, as expected, the inverse problem is severely ill-posed. 
Indeed, such constants blow up exponentially with the dimension of the finite-dimensional subspace and doubly exponentially with the final time $T$.
However, such a Lipschitz stability estimate paves the way for the use of iterative locally convergent reconstruction algorithms, such as the Landweber iteration method \cite{DQS2012, kaltenbacher:neubauer:scherzer}.
Due to the challenging nature of the ill-posed inverse problem, we plan to address an in-depth numerical study in a forthcoming paper, aiming to explore the possibility of an efficient reconstruction to be eventually used in clinical procedures.

The paper is organised as follows. 
In Section~\ref{sec2} we recall the well-posedness results shown in \cite{CGLMRR2019} and then prove that the solution operator is continuously Fr\'echet differentiable. 
Section~\ref{sec3} is devoted to the analysis of the inverse problem of reconstructing the initial data.
Here we prove our main stability results in Theorem \ref{thm:inizstab} and Theorem \ref{thm:lipstab}.
Section~\ref{sec:conclusion} finally summarises the obtained results and explores future directions for the numerical study of the problem.

\section{Well-posedness and differentiability of the solution operator}
\label{sec2}

We first introduce some notation that will be used throughout the paper. Let $\Omega \subset \R^N$, $N=2,3$, be an open and bounded domain with $\mathcal{C}^2$ boundary and outward unit normal vector $\n$. Let $T \gs 0$ be the final time, and denote $Q_t := \Omega \times (0,t)$ and $\Sigma_t = \partial \Omega \times (0,t)$ for any $t \in (0,T]$. 
We define the spaces:
\[ H = \Lx2, \quad V_0 = H^1_0(\Omega), \quad V_0^* = (H^1_0(\Omega))^*, \quad V = \Hx1, \quad V^* = (\Hx1)^*, \]
\[ W_0 = \Hx2 \cap \Huo, \quad W = \left\{ u \in \Hx2 \mid \partial_{\n} u = 0 \right\}. \]
By standard results, we know that, if $H$ is identified with its dual, the following compact and dense embeddings hold:
\[ W_0 \hookrightarrow V_0 \hookrightarrow H \hookrightarrow V_0^* \quad \text{and} \quad W \hookrightarrow V \hookrightarrow H \hookrightarrow V^*.  \]
Recall that, by elliptic regularity, we can use the equivalent norms:
\[ \norm{u}^2_{W_0} = \norm{u}^2_H + \norm{\Delta u}^2_H, \quad \norm{u}^2_W = \norm{u}^2_H + \norm{\Delta u}^2_H. \]
In some cases, we will denote for simplicity:
\[ \HH = H \times H \times H, \quad \VV = V_0 \times V \times V. \]
By using standard Bochner theory for spaces of time-dependent functions with values on Banach spaces, we also introduce the spaces in which weak and strong solutions of system \eqref{eq:phi}--\eqref{ic} will be respectively defined:
\begin{align*}
    \YY_0 & = \HT1{V_0^*} \cap \C0H \cap \LT2{V_0}, \\
    \YY & = \HT1{V^*} \cap \C0H \cap \LT2V, \\
    \XX_0 & = \HT1H \cap \C0{V_0} \cap \LT2{W_0},  \\
    \XX & = \HT1H \cap \C0V \cap \LT2W. 
\end{align*}
Regarding the parameters of the system, we assume the following hypotheses:
\begin{enumerate}[font = \bfseries, label = A\arabic*., ref = \bf{A\arabic*}]
	\item\label{ass:coeff} $\lambda, \eta, \gamma_h, \gamma_c, S_h, S_c, D, \gamma_p, \alpha_h, \alpha_c > 0$.
    \item\label{ass:Fh} $F(s) =  M s^2(1-s)^2$ and $\hh(s) = M s^2(3-2s)$, with $M>0$, for any $s \in \R$. In particular, we observe that $F, \hh \in \Ccal^3(\R)$. 
    \item\label{ass:m} $m(s) = m_{\text{ref}} \left( \frac{\rho + A}{2} + \frac{\rho - A}{2} \arctan \left( \frac{s - \sigma_l}{\sigma_r} \right) \right)$, with $m_{\text{ref}}, \rho, A, \sigma_l, \sigma_r > 0$. In particular, we observe that $m$ and $m'$ are Lipschitz continuous on $\R$ and $m, m', m'' \in L^\infty(\R)$.
\end{enumerate}
In what follows, we will extensively use the symbol $C \gs 0$, which may also change from line to line, to denote positive constants depending only on the fixed parameters of the system and possibly on $\Omega$ and $T$. In some cases, we will use a subscript to highlight some particular dependence of these constants.
Finally, to simplify a bit the analysis of the source terms in the model, we rewrite it in the following way:
\begin{alignat}{2}
	& \partial_t \phi - \lambda \Delta \phi + F'(\phi) - m(\sigma) \hh'(\phi) = 0 
	&& \hbox{in $Q_T$,} \label{eq:phi} \\
	& \partial_t \sigma - \eta \Delta \sigma = S_h + S_{ch} \phi - \gamma_h \sigma - \gamma_{ch} \sigma \phi 
	\qquad && \hbox{in $Q_T$,} \label{eq:sigma} \\
	& \partial_t p - D \Delta p + \gamma_p p 
	= \alpha_h + \alpha_{ch} \phi 
	&& \hbox{in $Q_T$,} \label{eq:p} \\
	& \phi = 0, \quad \partial_{\n} \sigma = \partial_{\n} p = 0 
	&& \hbox{in $\Sigma_T$,} \label{bc} \\
	& \phi(0) = \phi_0, \quad \sigma(0) = \sigma_0, \quad p(0) = p_0
	&& \hbox{in $\Omega$,} \label{ic}
\end{alignat}
where we call $\gamma_{ch} := \gamma_c - \gamma_h$, $S_{ch} := S_c - S_h$ and $\alpha_{ch} := \alpha_c - \alpha_h$.

Our first step consists of the analysis of the forward system.
We start by recalling the following result about the well-posedness of system \eqref{eq:phi}--\eqref{ic}, proved in \cite[Theorem 3.2]{CGLMRR2019}. 

\begin{theorem}
\label{thm:wellposedness}
    Assume hypotheses \ref{ass:coeff}--\ref{ass:m} and let 
    \begin{equation}
    \label{ass:initial_phi}
        (\phi_0, \sigma_0, p_0) \in \HH, \quad \text{and} \quad 0 \le \phi_0(x) \le 1 \quad \text{for a.e. } x \in \Omega.
    \end{equation}
    Then, there exists a unique weak solution 
    \[ (\phi, \sigma, p) \in \YY_0 \times \YY \times \YY, \]
    with 
    \begin{equation}
    \label{phi:bounded}
        0 \le \phi(x,t) \le 1 \quad \text{a.e. } (x,t) \in Q_T,
    \end{equation}
    which solves system \eqref{eq:phi}--\eqref{ic} in variational formulation, namely
    \begin{align*}
        & \duality{\partial_t \phi, v}_{V_0} + \lambda (\nabla \phi, \nabla v)_H + (F'(\phi) - m(\sigma) \hh'(\phi), v)_H = 0 
    	&& \hbox{a.e. in $(0,T)$, $\forall v \in V_0$}, \\
    	& \duality{\partial_t \sigma, v}_V + \eta (\nabla \sigma, \nabla v)_H + (\gamma_h \sigma + \gamma_{ch} \sigma \phi 
    	- S_h - S_{ch} \phi, v)_H = 0 
    	&& \hbox{a.e. in $(0,T)$, $\forall v \in V$}, \\
    	& \duality{\partial_t p, v}_V + D (\nabla p, \nabla v)_H + (\gamma_p p - \alpha_h - \alpha_{ch} \phi, v)_H = 0
    	&& \hbox{a.e. in $(0,T)$, $\forall v \in V$},
    \end{align*}
    with $\phi(0)=\phi_0$, $\sigma(0)=\sigma_0$ and $p(0)=p_0$, and satisfies 
    \begin{equation}
    \label{weaksols:est}
        \norm{\phi}^2_{\YY_0} + \norm{\sigma}^2_{\YY} + \norm{p}^2_{\YY} \le C \left( 1 + \norm{\phi_0}^2_H + \norm{\sigma_0}^2_H + \norm{p_0}^2_H \right),
    \end{equation}
    for some $C \gs 0$ depending only on the parameters of the system.
    Moreover, if 
    \begin{equation}
    \label{ass:initial_sigmap}
        \begin{split}
            \sigma_0, \, p_0 \in \Lx \infty, \quad \text{and} \quad \sigma_0(x) \ge 0, \, p_0(x) \ge 0 \text{ for a.e. } x \in \Omega,
        \end{split}
    \end{equation}
    then also 
    \[ \sigma, p \in \Lqt \infty, \quad \text{and} \quad \sigma(x,t) \ge 0, \, p(x,t) \ge 0 \text{ for a.e. } (x,t) \in Q_T, \]
    with 
    \[ \norm{\sigma}_{\Lqt\infty} \le C (\norm{\sigma_0}_{\Lx\infty} + 1), \quad \norm{p}_{\Lqt\infty} \le C (\norm{p_0}_{\Lx\infty} + 1). \]
    Furthermore, the solution is continuous with respect to the initial data, that is, taking two solutions $(\phi_i, \sigma_i, p_i)$ corresponding to $(\phi_0^i, \sigma_0^i, p_0^i)$, $i =1,2$, we have 
    \begin{equation}
        \label{contdep:est}
        \begin{split}
        & \norm{(\phi_1 - \phi_2)(t)}^2_{H} + \norm{(\sigma_1 - \sigma_2)(t)}^2_{H} + \norm{(p_1 - p_2)(t)}^2_{H} \\
        & \quad + \norm{\phi_1 - \phi_2}^2_{\LT 2 {V_0}} + \norm{\sigma_1 - \sigma_2}^2_{\LT 2 V} + \norm{p_1 - p_2}^2_{\LT 2 V} \\ 
        & \qquad \le C \left( \norm{\phi_0^1 - \phi_0^2}^2_H + \norm{\sigma_0^1 - \sigma_0^2}^2_H + \norm{p_0^1 - p_0^2}^2_H \right),
        \end{split}
    \end{equation}
    for any $t \in [0,T]$. Finally, if $(\phi_0, \sigma_0, p_0) \in \VV$, the solution enjoys the higher regularity $(\phi, \sigma, p) \in \XX_0 \times \XX \times \XX$, solves the system \eqref{eq:phi}--\eqref{ic} almost everywhere and satisfies the estimate
    \begin{equation}
    \label{strongsols:est}
        \norm{\phi}^2_{\XX_0} + \norm{\sigma}^2_{\XX} + \norm{p}^2_{\XX} \le C \left( 1 + \norm{\phi_0}^2_{V_0} + \norm{\sigma_0}^2_V + \norm{p_0}^2_V \right).
    \end{equation}
\end{theorem}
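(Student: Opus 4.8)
The plan is to construct a weak solution by a Faedo--Galerkin scheme and a priori estimates, and then to upgrade it by maximum-principle and regularity arguments. I would fix the eigenbasis of $-\Delta$ on $\Huo$ for the $\phi$-component and the eigenbasis of $-\Delta$ with homogeneous Neumann conditions for the $\sigma$- and $p$-components, project the three equations onto the span of the first $n$ modes, and solve the resulting ODE system by Cauchy--Lipschitz (the nonlinearities $F'$, $\hh'$, $m$ are smooth), obtaining a local discrete solution $(\phi_n,\sigma_n,p_n)$. To globalise it and pass to the limit, I would derive the basic energy estimate by testing the $\phi$-equation with $\phi_n$, the $\sigma$-equation with $\sigma_n$ and the $p$-equation with $p_n$, using $F(s)\ge 0$, the coercivity bound $F'(s)s\ge -C$ (a quartic with positive leading coefficient), and $m,\hh'\in L^\infty$ together with Young's and Gronwall's inequalities; this yields the uniform bound \eqref{weaksols:est} in $\YY_0\times\YY\times\YY$, and comparison in the equations bounds the time derivatives in the dual spaces. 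Since $\YY_0\hookrightarrow L^2(Q_T)$ compactly (Aubin--Lions--Simon), I can extract subsequences with $\phi_n\to\phi$ and $\sigma_n\to\sigma$ strongly in $L^2(Q_T)$ and a.e.; the parabolic interpolation $\C0H\cap\LT2{V_0}\hookrightarrow L^{10/3}(Q_T)$ (for $N\le 3$) bounds the cubic term $F'(\phi_n)$ uniformly in $L^{10/9}(Q_T)$, so that strong plus a.e.\ convergence lets me pass to the limit in $F'(\phi_n)$, $m(\sigma_n)\hh'(\phi_n)$ and the product $\sigma_n\phi_n$, recovering the variational formulation.

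The step I expect to be the main obstacle is the pointwise bound \eqref{phi:bounded}, $0\le\phi\le1$. The reaction term factorises as $F'(\phi)-m(\sigma)\hh'(\phi)=2M\phi(1-\phi)\bigl[(1-2\phi)-3m(\sigma)\bigr]$, which vanishes at $\phi=0$ and $\phi=1$ for every value of $\sigma$; these are the two stable phases. I would prove the bound on the limit solution, whose regularity $\phi\in\HT1{V_0^*}\cap\LT2{V_0}$ justifies the chain rule, by testing the $\phi$-equation with $(\phi-1)^+$ and with the negative part $\phi^-:=\max\{-\phi,0\}$; since $\phi=0$ on $\Sigma_T$, both truncations lie in $V_0$. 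On $\{\phi>1\}$ the reaction has a favourable sign for large $\phi$, while on the bounded transition layer $1<\phi<(1-3m(\sigma))/2$ (present only when $m(\sigma)<-1/3$) the reaction vanishes at $\phi=1$ and is locally Lipschitz with $m(\sigma)\in L^\infty$, so $\int\bigl(F'(\phi)-m(\sigma)\hh'(\phi)\bigr)(\phi-1)^+\ge -C\norm{(\phi-1)^+}_H^2$; Gronwall's inequality with $(\phi_0-1)^+=0$ then forces $(\phi-1)^+\equiv 0$, and the lower bound is symmetric. This also cleans up the limit passage, since on $[0,1]$ all nonlinearities are bounded and Lipschitz.

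Uniqueness and the continuous-dependence estimate \eqref{contdep:est} I would establish simultaneously: writing the system for the differences of two solutions, testing the three difference equations with the respective differences, and using that on the invariant region $[0,1]$ the maps $F'$ and $\hh'$ are Lipschitz, that $m$ and $m'$ are Lipschitz, and that $\sigma,p$ are bounded, all cross terms are controlled through Young's inequality and absorbed into the dissipation; Gronwall then gives \eqref{contdep:est}, with uniqueness as the special case of equal data. For the nonnegativity and $\Lqt\infty$ bounds on $\sigma$ and $p$ under \eqref{ass:initial_sigmap}, I observe that, once $0\le\phi\le1$ is known, the $\sigma$- and $p$-equations are linear reaction--diffusion equations with bounded coefficients and bounded nonnegative sources; a Stampacchia truncation argument, testing with $\sigma^-$ and with $(\sigma-K)^+$ for $K$ exceeding $\norm{\sigma_0}_{\Lx\infty}$ and the source-to-decay ratio, and analogously for $p$, delivers both the sign and the stated bounds.

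Finally, for data in $\VV$ I would bootstrap to strong solutions: testing the $\phi$-equation with $-\Delta\phi$ (equivalently $\partial_t\phi$) and the $\sigma$- and $p$-equations with $-\Delta\sigma$ and $-\Delta p$, and using the already-established $L^\infty$ bounds on $\phi,\sigma,p$ to control $F'(\phi)$, $m(\sigma)\hh'(\phi)$ and the products $\sigma\phi$ in $\LT2H$, yields uniform bounds in $\C0{V_0}\cap\LT2{W_0}\cap\HT1H$ for $\phi$ and in $\C0V\cap\LT2W\cap\HT1H$ for $\sigma,p$, whence \eqref{strongsols:est} and, by elliptic regularity, that the equations hold almost everywhere. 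The crux of the whole argument is the interplay between the cubic nonlinearity and the invariant-region bound: once $0\le\phi\le1$ is secured every nonlinear term becomes bounded and Lipschitz and the remaining estimates are routine parabolic energy computations, so that securing that bound, and justifying the limit passage in the cubic term beforehand, is where the real work lies.
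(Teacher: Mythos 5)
The first thing to note is that the paper does not actually prove this theorem: it is imported verbatim from \cite[Theorem 3.2]{CGLMRR2019}, and the sentence preceding the statement says exactly that. So your proposal is being measured not against an in-paper argument but against the standard well-posedness proof in the cited reference, and on that score your outline (Faedo--Galerkin with the Dirichlet eigenbasis for $\phi$ and the Neumann one for $\sigma,p$, energy estimates, Aubin--Lions compactness, an invariant-region/Stampacchia argument for \eqref{phi:bounded} and for the sign and $\Lqt\infty$ bounds on $\sigma,p$, an $L^2$-contraction estimate for uniqueness and \eqref{contdep:est}, and a $-\Delta$-testing bootstrap for \eqref{strongsols:est}) is exactly the expected route and is in substance correct. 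In particular, the factorisation $F'(\phi)-m(\sigma)\hh'(\phi)=2M\phi(1-\phi)\bigl[(1-2\phi)-3m(\sigma)\bigr]$ and the treatment of the bounded transition layer where the reaction has the wrong sign is the right way to obtain $0\le\phi\le1$.

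Three points would need repair in a full write-up. First, $\hh'(s)=6Ms(1-s)$ is a quadratic polynomial and is \emph{not} in $L^\infty(\R)$ (hypothesis \ref{ass:m} only bounds $m,m',m''$); at the Galerkin level, where $\phi_n$ is not yet confined to $[0,1]$, the term $m(\sigma_n)\hh'(\phi_n)\phi_n$ is cubic and must be absorbed by the quartic $4M\phi_n^4$ coming from $F'(\phi_n)\phi_n$ via Young's inequality --- which, as a useful by-product, yields a uniform $L^4(Q_T)$ bound on $\phi_n$. Second, that $L^4(Q_T)$ bound is actually needed (and should be recorded) to make sense of $\int_{Q_T} F'(\phi)(\phi-1)^+$ when you test the limit equation for the maximum principle: the parabolic interpolation exponent $10/3$ you quote is below $4$, so the quartic integrand is not summable from $\C0H\cap\LT2{V_0}$ alone. (The common alternative, which avoids this issue entirely, is to first truncate $F'$ and $\hh'$ outside $[0,1]$, prove everything for the resulting globally Lipschitz system, and then remove the truncation once \eqref{phi:bounded} is established.) Third, the continuous dependence estimate \eqref{contdep:est} is asserted for data merely in $\HH$ with $0\le\phi_0\le1$, i.e.\ without \eqref{ass:initial_sigmap}, so you cannot invoke $\sigma\in\Lqt\infty$ there; the cross term $\gamma_{ch}\sigma_1(\phi_1-\phi_2)$ must instead be handled through $\sigma_1\in L^2(0,T;L^6(\Omega))$, H\"older's inequality, and absorption of the $V_0$-norm of $\phi_1-\phi_2$ into the dissipation. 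None of these affects the architecture of the proof; they are local repairs.
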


\begin{remark}
    Due to our model, hypotheses \ref{ass:Fh} and \ref{ass:m} are very specific on the non-linearities that we consider.
    However, from an analytical point of view, one could consider more general expressions of $F$, $\hh$ and $m$, as long as the results of Theorem \ref{thm:wellposedness} still hold.
    Hence, the forthcoming analysis can easily be extended to a more general framework.
\end{remark}

We additionally prove a stronger continuous dependence estimate, which will be useful when proving the Fr\'echet-differentiability of the solution operator. 

\begin{proposition}
    \label{prop:strongcontdep}
    Assume hypotheses \ref{ass:coeff}--\ref{ass:m} and let $(\phi_0^i, \sigma_0^i, p_0^i) \in \VV$, $i =1,2$, be two sets of initial data, satisfying also \eqref{ass:initial_phi} and \eqref{ass:initial_sigmap}. Let $(\phi_i, \sigma_i, p_i)$, $i=1,2$, be the corresponding strong solutions of \eqref{eq:phi}--\eqref{ic}. Then, there exists a constant $C > 0$, depending only on the parameters of the system and on $(\phi_0^i, \sigma_0^i, p_0^i)$, $i =1,2$, but not on their difference, such that
    \begin{equation}
        \label{contdep:eststrong}
        \begin{split}
        & \norm{(\phi_1 - \phi_2)(t)}^2_{V_0} + \norm{(\sigma_1 - \sigma_2)(t)}^2_{V} + \norm{(p_1 - p_2)(t)}^2_{V} \\
        & \quad + \norm{\phi_1 - \phi_2}^2_{\LT 2 {W_0}} + \norm{\sigma_1 - \sigma_2}^2_{\LT 2 W} + \norm{p_1 - p_2}^2_{\LT 2 W} \\ 
        & \qquad \le C \left( \norm{\phi_0^1 - \phi_0^2}^2_{V_0} + \norm{\sigma_0^1 - \sigma_0^2}^2_V + \norm{p_0^1 - p_0^2}^2_V \right), \quad  \forall \, t \in [0,T].
        \end{split}
    \end{equation}
\end{proposition}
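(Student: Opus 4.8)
The plan is to upgrade the $H$-level continuous dependence \eqref{contdep:est} to the $V$-level by means of higher-order energy estimates on the difference of the two strong solutions, testing each equation with the negative Laplacian of the corresponding variable. Set $\bar\phi := \phi_1 - \phi_2$, $\bar\sigma := \sigma_1 - \sigma_2$ and $\bar p := p_1 - p_2$, together with $\bar\phi_0 := \phi_0^1 - \phi_0^2$, $\bar\sigma_0 := \sigma_0^1 - \sigma_0^2$, $\bar p_0 := p_0^1 - p_0^2$; the triple $(\bar\phi,\bar\sigma,\bar p)$ solves the system obtained by subtracting \eqref{eq:phi}--\eqref{eq:p} for the two solutions, with initial data $\bar\phi_0,\bar\sigma_0,\bar p_0$ and the boundary conditions \eqref{bc}. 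Since the initial data lie in $\VV$, Theorem \ref{thm:wellposedness} guarantees strong solutions $(\phi_i,\sigma_i,p_i)\in\XX_0\times\XX\times\XX$, so in particular $\partial_t\bar\phi,\Delta\bar\phi\in\LT2H$ and likewise for $\bar\sigma,\bar p$, which makes the scalar products $(\partial_t\bar\phi,-\Delta\bar\phi)_H$ etc.\ integrable in time. Moreover, by \eqref{ass:initial_sigmap} we have $\sigma_1\in\Lqt\infty$ with $\norm{\sigma_1}_{\Lqt\infty}\le C(\norm{\sigma_0^1}_{\Lx\infty}+1)$, and $0\le\phi_i\le1$ a.e.\ by \eqref{phi:bounded}; these $L^\infty$ and $[0,1]$ bounds are precisely what make the constant depend on the individual data (through $\norm{\sigma_0^1}_{\Lx\infty}$) but not on their difference.

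First I would test the $\bar\phi$-equation by $-\Delta\bar\phi$ in $H$. Using the identity $(\partial_t u,-\Delta u)_H = \tfrac12\tfrac{\de}{\de t}\norm{\nabla u}_H^2$, valid for $u\in\LT2{W_0}\cap\HT1H$, the time-derivative term yields $\tfrac12\tfrac{\de}{\de t}\norm{\nabla\bar\phi}_H^2$ and the diffusion term yields $\lambda\norm{\Delta\bar\phi}_H^2$. The nonlinear terms are controlled via the mean value theorem and the a priori bounds: since $F,\hh\in\Ccal^3$ and $0\le\phi_i\le1$, one has $\abs{F'(\phi_1)-F'(\phi_2)}\le C\abs{\bar\phi}$ and $\abs{\hh'(\phi_1)-\hh'(\phi_2)}\le C\abs{\bar\phi}$, while the splitting $m(\sigma_1)\hh'(\phi_1)-m(\sigma_2)\hh'(\phi_2) = m(\sigma_1)\left(\hh'(\phi_1)-\hh'(\phi_2)\right) + \left(m(\sigma_1)-m(\sigma_2)\right)\hh'(\phi_2)$ together with the Lipschitz continuity of $m$ produces a bound by $C(\abs{\bar\phi}+\abs{\bar\sigma})$. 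Pairing these with $-\Delta\bar\phi$ and using Young's inequality to absorb $\tfrac\lambda2\norm{\Delta\bar\phi}_H^2$ gives $\tfrac12\tfrac{\de}{\de t}\norm{\nabla\bar\phi}_H^2 + \tfrac\lambda2\norm{\Delta\bar\phi}_H^2 \le C(\norm{\bar\phi}_H^2+\norm{\bar\sigma}_H^2)$. I would treat the $\bar\sigma$- and $\bar p$-equations analogously, testing by $-\Delta\bar\sigma$ and $-\Delta\bar p$: the bilinear term is rewritten as $\sigma_1\phi_1-\sigma_2\phi_2 = \sigma_1\bar\phi + \phi_2\bar\sigma$ and estimated by $\norm{\sigma_1}_{\Lqt\infty}$ and $0\le\phi_2\le1$, the dissipative term $\gamma_p\bar p$ gives a nonnegative contribution $\gamma_p\norm{\nabla\bar p}_H^2$ after integration by parts (the Neumann boundary term vanishing), and Young's inequality again leaves only $H$-norms of $\bar\phi,\bar\sigma$ on the right-hand side.

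Summing the three inequalities, the key structural observation is that the right-hand side contains only $\norm{\bar\phi}_H^2+\norm{\bar\sigma}_H^2$, which is already controlled, uniformly in $t$, by the $H$-level estimate \eqref{contdep:est}, hence by $C(\norm{\bar\phi_0}_{V_0}^2+\norm{\bar\sigma_0}_V^2+\norm{\bar p_0}_V^2)$. Consequently no Gr\"onwall argument is needed: integrating in time on $(0,t)$ and bounding the initial gradient energy by the $V$-norms of the data directly controls $\norm{\nabla\bar\phi(t)}_H^2+\norm{\nabla\bar\sigma(t)}_H^2+\norm{\nabla\bar p(t)}_H^2$ and $\int_0^t\left(\norm{\Delta\bar\phi}_H^2+\norm{\Delta\bar\sigma}_H^2+\norm{\Delta\bar p}_H^2\right)$. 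Adding back the $H$-norms from \eqref{contdep:est} and passing to the equivalent norms on $V_0,V,W_0,W$ (via Poincar\'e for $V_0$ and elliptic regularity for the $W$-norms) upgrades the gradient and Laplacian bounds to the full $V$- and $\LT2W$-norms of \eqref{contdep:eststrong}.

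The main obstacle I anticipate is purely technical: rigorously justifying the testing by $-\Delta$ of each variable, since $\partial_t\bar\phi$ is only known to lie in $\LT2H$ and not in $\LT2{V_0}$, so that the identity $(\partial_t\bar\phi,-\Delta\bar\phi)_H = \tfrac12\tfrac{\de}{\de t}\norm{\nabla\bar\phi}_H^2$ cannot be obtained by a naive spatial integration by parts. This is resolved by invoking the standard regularity lemma for functions in $\LT2{W_0}\cap\HT1H$ (equivalently, by a Faedo--Galerkin approximation or a Steklov time-averaging argument), which also provides the absolute continuity of $t\mapsto\norm{\nabla\bar\phi(t)}_H^2$ required to integrate the differential inequality; the same lemma applies to $\bar\sigma,\bar p$ in $\LT2W\cap\HT1H$, the Neumann condition ensuring the boundary terms vanish. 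The remaining care is bookkeeping: ensuring that every constant arising from the nonlinearities depends on the two data only through the a priori $L^\infty$ and $[0,1]$ bounds on $\sigma_i$ and $\phi_i$, so that the final constant is independent of the difference of the data, as claimed.
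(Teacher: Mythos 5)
Your proposal is correct and follows essentially the same route as the paper's proof: testing the difference system by the negative Laplacians, absorbing the $\Delta$-terms via Young's inequality, bounding the nonlinearities through the $L^\infty$ and $[0,1]$ a priori bounds so that only $H$-norms of the differences remain on the right, and closing via the $H$-level estimate \eqref{contdep:est}. Your observation that Gr\"onwall is not strictly needed (since \eqref{contdep:est} already bounds the time-integrated right-hand side) is accurate and slightly streamlines the paper's concluding step.
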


\begin{proof}
Let us define the differences
    \[ \phi := \phi_1 - \phi_2, \quad \sigma := \sigma_1 - \sigma_2, \quad p := p_1 - p_2. \]
    Then, up to adding and subtracting some terms, they satisfy the following system of equations:
    \begin{alignat}{2}
		& \partial_t \phi - \lambda \Delta \phi + ( F'(\phi_1) - F'(\phi_2) ) \nonumber \\ 
        & \qquad - m(\sigma_1) (\hh'(\phi_1) - \hh'(\phi_2)) - (m(\sigma_1) - m(\sigma_2)) \hh'(\phi_2) = 0
		&& \hbox{in $Q_T$,} \label{eq:phicont} \\
		& \partial_t \sigma - \eta \Delta \sigma + \gamma_h \sigma + \gamma_{ch} \sigma_1 (\phi_1 - \phi_2) + \gamma_{ch} (\sigma_1 - \sigma_2) \phi_2 = S_{ch} \phi
		\qquad && \hbox{in $Q_T$,} \label{eq:sigmacont} \\
		& \partial_t p - D \Delta p + \gamma_p p = \alpha_{ch} \phi
		&& \hbox{in $Q_T$,} \label{eq:pcont} \\
		& \phi = 0, \quad \partial_{\n} \sigma = \partial_{\n} p = 0 
		&& \hbox{in $\Sigma_T$,} \label{bccont} \\
		& \phi(0) = \phi_0^1 - \phi_0^2, \quad \sigma(0) = \sigma_0^1 - \sigma_0^2, \quad p(0) = p_0^1 - p_0^2
		&& \hbox{in $\Omega$.} \label{iccont}
	\end{alignat}
	Note that, being the initial data in $\VV$, by Theorem \ref{thm:wellposedness} the equalities above have to be intended in a strong sense. Therefore, we are allowed to test \eqref{eq:phicont} by $- \Delta \phi$, \eqref{eq:sigmacont} by $-\Delta \sigma$, \eqref{eq:pcont} by $- \Delta p$ and sum them up, to obtain:
    \begin{equation}
    \label{contdep:mainest}
        \begin{split}
            & \mezzo \ddt \left( \norm{\nabla \phi}^2_H + \norm{\nabla \sigma}^2_H + \norm{\nabla p}^2_H \right) + \lambda \norm{\Delta \phi}^2_H + \eta \norm{\Delta \sigma}^2_H + D \norm{\Delta p}^2_H \\
            & \quad = - \gamma_h \norm{\nabla \sigma}^2_H - \gamma_p \norm{\nabla p}^2_H +  (F'(\phi_1) - F'(\phi_2), \Delta \phi)_H \\
            & \qquad - (m(\sigma_1) (\hh'(\phi_1) - \hh'(\phi_2)), \Delta \phi)_H - ((m(\sigma_1) - m(\sigma_2)) \hh'(\phi_2), \Delta \phi)_H \\
            & \qquad + \gamma_{ch} (\sigma_1 (\phi_1 - \phi_2), \Delta \sigma)_H + \gamma_{ch} ((\sigma_1 - \sigma_2) \phi_2, \Delta \sigma)_H \\
            & \qquad - S_{ch} (\phi, \Delta \sigma)_H - \alpha_{ch} (\phi, \Delta p)_H.
        \end{split}
    \end{equation}
    Then, we can easily estimate the terms on the right-hand side by using Cauchy-Schwarz and Young's inequalities, together with the local Lipschitz continuity of $F'$, $\hh'$, $m$ and the fact that $\phi_i, \sigma_i \in \Lqt\infty$ for $i=1,2$. Indeed, we have that: 
    \begin{align*}
        & (F'(\phi_1) - F'(\phi_2), \Delta \phi)_H - (m(\sigma_1) (\hh'(\phi_1) - \hh'(\phi_2)), \Delta \phi)_H - ((m(\sigma_1) - m(\sigma_2)) \hh'(\phi_2), \Delta \phi)_H \\
        & \qquad + \gamma_{ch} (\sigma_1 (\phi_1 - \phi_2), \Delta \sigma)_H + \gamma_{ch} ((\sigma_1 - \sigma_2) \phi_2, \Delta \sigma)_H - S_{ch} (\phi, \Delta \sigma)_H - \alpha_{ch} (\phi, \Delta p)_H \\
        & \quad \le \frac{\lambda}{2} \norm{\Delta \phi}^2_H + \frac{\eta}{2} \norm{\Delta \sigma}^2_H + \frac{D}{2} \norm{\Delta p}^2_H + C \norm{\phi}^2_H + C \norm{\sigma}^2_H. 
    \end{align*}
    Hence, by also integrating on $(0,t)$, for any $t \in (0,T)$, we get that 
    \begin{align*}
        & \mezzo \left( \norm{\nabla \phi(t)}^2_H + \norm{\nabla \sigma(t)}^2_H + \norm{\nabla p(t)}^2_H \right) + \frac{\lambda}{2} \int_0^t \norm{\Delta \phi}^2_H \, \de s + \frac{\eta}{2} \int_0^t \norm{\Delta \sigma}^2_H \, \de s + \frac{D}{2} \int_0^t \norm{\Delta p}^2_H \, \de s \\
        & \quad \le \mezzo \left( \norm{\phi_0^1 - \phi_0^2}^2_{V_0} + \norm{\sigma_0^1 - \sigma_0^2}^2_V + \norm{p_0^1 - p_0^2}^2_V \right) + C \int_0^T \norm{\phi}^2_{V_0} \, \de s + C \int_0^T \norm{\sigma}^2_V \, \de s,
    \end{align*}
    which by Gronwall's inequality and \eqref{contdep:est} implies \eqref{contdep:eststrong}. Proposition \ref{prop:strongcontdep}
    is proved.
\end{proof}

Given the results of Theorem \ref{thm:wellposedness} and Proposition \ref{prop:strongcontdep}, we introduce the set of admissible initial data as follows:
\begin{equation}
    \label{admset:initial}
    \Iad = \left\{ (\phi_0, \sigma_0, p_0) \in \VV \mid 0 \le \phi_0 \le 1, \, 0 \le \sigma_0 \le \sigma_{\text{max}}, \, 0 \le p_0 \le p_{\text{max}} \right\},
\end{equation}
where $\sigma_{\text{max}}, p_{\text{max}} \in \Lx \infty$ are given. Note that $\Iad$ is a closed and convex subset of $\VV \cap \Lx\infty^3$.
Next, we want to study in detail the operator $\Rcal: \Iad \to \HH$ which associates to any initial data $(\phi_0, \sigma_0, p_0) \in \Iad$ the corresponding solution $(\phi(T), \sigma(T), p(T))$ to \eqref{eq:phi}--\eqref{ic}, evaluated at the final time, that is 
\begin{equation}
    \label{def:R}
    \Rcal: \Iad \to \HH, \quad \Rcal((\phi_0, \sigma_0, p_0)) = (\phi(T), \sigma(T), p(T)).
\end{equation}
Note that $\Rcal$ is well-defined and Lipschitz-continuous by Theorem \ref{thm:wellposedness}. 
We now want to prove that it is also continuously Fr\'echet-differentiable. To do this, we introduce the linearised system:
\begin{alignat}{2}
	& \partial_t Y - \lambda \Delta Y + F''(\phib) Y - m(\sigmab) \hh''(\phib) Y - m'(\sigmab) \hh'(\phib) Z = 0 
	\qquad && \hbox{in $Q_T$,} \label{eq:philin} \\
	& \partial_t Z - \eta \Delta Z + \gamma_h Z + \gamma_{ch} \sigmab Y + \gamma_{ch} \phib Z - S_{ch} Y = 0 
	&& \hbox{in $Q_T$,} \label{eq:sigmalin} \\
	& \partial_t P - D \Delta P + \gamma_p P 
	= \alpha_{ch} Y 
	&& \hbox{in $Q_T$,} \label{eq:plin} \\
	& Y = 0, \quad \partial_{\n} Z = \partial_{\n} P = 0 
	&& \hbox{on $\Sigma_T$,} \label{bclin} \\
	& Y(0) = h, \quad Z(0) = k, \quad P(0) = w
	&& \hbox{in $\Omega$,} \label{iclin}
\end{alignat}
where $h, k ,w \in \Lx2$. This system is deduced by fixing an initial datum $(\phib_0, \sigmab_0, \pb_0)$, as well as its corresponding solution $(\phib, \sigmab, \pb)$, by linearising near it, i.e.~by considering 
\[ \phi = \phib + Y, \, \sigma = \sigmab + Z, \, p = \pb + P, \, \phi_0 = \phib_0 + h, \, \sigma_0 = \sigmab_0 + k, \, p_0 = \pb_0 + w, \]
and by recovering the system solved by $(Y,Z,P)$, while approximating the non-linearities at the first-order of their Taylor expansion. We have the following result about the well-posedness of \eqref{eq:philin}--\eqref{iclin}. 

\begin{proposition}
\label{prop:linearised}
    Assume hypotheses \ref{ass:coeff}--\ref{ass:m} and let $(h, k, w) \in \HH$. Let $(\phib_0, \sigmab_0, \pb_0) \in \Iad$. Then, \eqref{eq:philin}--\eqref{iclin} admits a unique weak solution $(Y,Z,P) \in \YY_0 \times \YY \times \YY$, which solves the system in variational formulation and satisfies the estimate:
    \[ \norm{Y}^2_{\YY_0} + \norm{Z}^2_{\YY} + \norm{P}^2_{\YY} \le C \left( \norm{h}^2_H + \norm{k}^2_H + \norm{w}^2_H \right). \]
    Moreover, if $(h, k, w) \in \VV$, the solution enjoys higher regularity $(Y, Z, P) \in \XX_0 \times \XX \times \XX$, solves the system almost everywhere and satisfies the estimate:
    \[ \norm{Y}^2_{\XX_0} + \norm{Z}^2_{\XX} + \norm{P}^2_{\XX} \le C \left(\norm{h}^2_{V_0} + \norm{k}^2_V + \norm{w}^2_V \right). \]
\end{proposition}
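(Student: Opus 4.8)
The plan is to treat \eqref{eq:philin}--\eqref{iclin} as a linear parabolic system with bounded coefficients and run the standard Faedo--Galerkin scheme, mirroring the proof of Theorem \ref{thm:wellposedness}. The first observation is that all the coefficients multiplying the unknowns lie in $\Lqt\infty$: since $(\phib_0, \sigmab_0, \pb_0) \in \Iad \subset \VV$ satisfies \eqref{ass:initial_phi} and \eqref{ass:initial_sigmap}, Theorem \ref{thm:wellposedness} guarantees that the base solution is strong with $0 \le \phib \le 1$ and $\sigmab, \pb \in \Lqt\infty$. Combined with $F, \hh \in \Ccal^3(\R)$ and $m, m' \in L^\infty(\R)$ from hypotheses \ref{ass:Fh}--\ref{ass:m}, this makes $F''(\phib)$, $m(\sigmab)\hh''(\phib)$, $m'(\sigmab)\hh'(\phib)$, $\gamma_{ch}\sigmab$ and $\gamma_{ch}\phib$ all bounded functions on $Q_T$. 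I would then introduce a Galerkin approximation using the eigenfunctions of the Dirichlet Laplacian as a basis for the $Y$-component (so that $Y_n \in V_0 \cap W_0$) and those of the Neumann Laplacian for the $Z$- and $P$-components; since the resulting ODE system for the coefficients is linear with $L^\infty$-in-time data, it possesses a unique global solution on $[0,T]$.

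For the first estimate, I would test the three discretised equations by $Y_n$, $Z_n$, $P_n$ respectively and sum. The dissipative terms $\lambda\norm{\nabla Y_n}^2_H$, $\eta\norm{\nabla Z_n}^2_H$, $D\norm{\nabla P_n}^2_H$ and the favourable zeroth-order terms $\gamma_h\norm{Z_n}^2_H$, $\gamma_p\norm{P_n}^2_H$ are kept on the left-hand side, while the coupling terms involving $m'(\sigmab)\hh'(\phib)Z_n$, $\gamma_{ch}\sigmab Y_n$, $\gamma_{ch}\phib Z_n$, $S_{ch}Y_n$, $\alpha_{ch}Y_n$ together with the remaining zeroth-order contributions are, thanks to the boundedness of the coefficients and Cauchy--Schwarz and Young's inequalities, controlled by $C(\norm{Y_n}^2_H + \norm{Z_n}^2_H + \norm{P_n}^2_H)$. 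Gronwall's lemma then yields uniform bounds in $\C0H \cap \LT2{V_0}$ for $Y_n$ and in $\C0H \cap \LT2V$ for $Z_n, P_n$, controlled by $\norm{h}^2_H + \norm{k}^2_H + \norm{w}^2_H$, while bounds on $\partial_t Y_n$ in $\LT2{V_0^*}$ and on $\partial_t Z_n, \partial_t P_n$ in $\LT2{V^*}$ follow by comparison in the equations. Passing to the limit is straightforward precisely because the system is linear: each coefficient is a fixed $\Lqt\infty$ function, so multiplication by it is continuous for the weak $L^2$-convergence of the approximants, and the limit $(Y,Z,P)$ solves the variational formulation and inherits the energy bound by weak lower semicontinuity. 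Uniqueness is then immediate from linearity, since the same estimate applied with $h=k=w=0$ forces the solution to vanish.

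For the higher regularity under $(h,k,w) \in \VV$, I would perform the second, elliptic-type energy estimate by testing the discretised equations with $-\Delta Y_n$, $-\Delta Z_n$, $-\Delta P_n$, which is legitimate since these functions lie in the Galerkin spaces spanned by the chosen eigenfunctions. Using $(\partial_t Y_n, -\Delta Y_n)_H = \mezzo\ddt\norm{\nabla Y_n}^2_H$ (the boundary terms vanishing thanks to the Dirichlet and Neumann conditions built into the basis), I would absorb the right-hand sides into $\lambda\norm{\Delta Y_n}^2_H$, $\eta\norm{\Delta Z_n}^2_H$, $D\norm{\Delta P_n}^2_H$ up to a term $C(\norm{Y_n}^2_H + \norm{Z_n}^2_H)$ already controlled by the first estimate. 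Gronwall then gives uniform bounds in $\C0{V_0}\cap\LT2{W_0}$ and $\C0V\cap\LT2W$ in terms of $\norm{h}^2_{V_0}+\norm{k}^2_V+\norm{w}^2_V$, and reading $\partial_t Y$, $\partial_t Z$, $\partial_t P$ off the equations places them in $\LT2H$, giving membership in $\XX_0\times\XX\times\XX$. I do not expect any genuine obstacle: the system is linear with bounded coefficients, so the only points demanding attention are the bookkeeping of the coupling terms and the verification that the base-solution regularity from Theorem \ref{thm:wellposedness} indeed makes every coefficient lie in $\Lqt\infty$, which is exactly why $\Iad$ was defined with the $L^\infty$ constraints on $\sigma_0$ and $p_0$.
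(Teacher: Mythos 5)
Your proposal is correct and follows essentially the same route as the paper, which simply observes that \eqref{eq:philin}--\eqref{iclin} is a linear parabolic system with coefficients in $\Lqt\infty$ (by Theorem \ref{thm:wellposedness} and hypotheses \ref{ass:Fh}--\ref{ass:m}) and invokes standard parabolic theory. Your Galerkin scheme with the two levels of energy estimates is precisely the standard argument the paper leaves implicit, and the key point you identify --- that the $L^\infty$ bounds on $\phib$, $\sigmab$ make every coefficient bounded --- is exactly the paper's justification.
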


\begin{proof}
    Note that system \eqref{eq:philin}--\eqref{iclin} is composed of three \emph{linear} parabolic equations without sources and with coefficients in $\Lqt \infty$, thanks to Theorem \ref{thm:wellposedness}. Then, the statements of Proposition \ref{prop:linearised} follow by standard results from the theory of parabolic equations.
\end{proof}

The linearised system \eqref{eq:philin}--\eqref{iclin} is an \emph{ansatz} for the explicit expression of the Fr\'echet-derivative of the operator $\Rcal$. Indeed, we are now in position to show that 
\begin{equation}
    \label{frechet:derivative}
    \D\Rcal(\phib_0, \sigmab_0, \pb_0)[(h,k,w)] = (Y(T), Z(T), P(T)).
\end{equation}
Let $\mathcal{I}_R$ be an open subset of $\VV \cap \Lx\infty^3$ such that $\Iad \subseteq \mathcal{I}_R$. Note that, in $\mathcal{I}_R$, the continuous dependence estimate \eqref{contdep:eststrong} holds with $C$ depending only on $R$ and the fixed data of the system.
Our aim is to show that $\Rcal: \mathcal{I}_R \to \HH$ is continuously Fr\'echet-differentiable.
Indeed, we can prove:
%the following theorem:

\begin{theorem}
    \label{thm:frechet}
    Assume hypothesis \ref{ass:coeff}--\ref{ass:m}. Then $\Rcal: \mathcal{I}_R \to \HH$ is Fr\'echet-differentiable, i.e. for any $(\phib_0, \sigmab_0, \pb_0) \in \mathcal{I}_R$ there exists a unique Fr\'echet-derivative $\D\Rcal((\phib_0, \sigmab_0, \pb_0)) \in \mathcal{L}(\VV \cap L^\infty(\Omega)^3, \HH)$ such that, as $\norm{(h,k,w)}_{\VV} \to 0$,
	\begin{equation}
		\label{frechet:diff}
		\frac{ \norm{ \Rcal((\phib_0 + h, \sigmab_0 + k, \pb_0 + w)) - \mathcal{R}((\phib_0, \sigmab_0, \pb_0)) - \D\mathcal{R}((\phib_0, \sigmab_0, \pb_0))[(h,k,w)]  }_{\HH}}{ \norm{(h,k,w)}_{\VV} } \to 0.
	\end{equation}
	Moreover, for any $(h,k,w) \in \VV \cap \Lx\infty^3$, the Fr\'echet-derivative at $(\phib_0, \sigmab_0, \pb_0)$ in $(h,k,w)$ is defined as in \eqref{frechet:derivative}, where $(Y(T), Z(T), P(T))$ is the solution to the linearised system \eqref{eq:philin}--\eqref{iclin} with initial data $(h,k,w)$, evaluated at the final time. 

    Additionally, the Fr\'echet-derivative $\D\Rcal$ is Lipschitz-continuous as a function from $\mathcal{I}_R$ to the space $\mathcal{L}(\VV \cap L^\infty(\Omega)^3, \HH)$. More precisely, the following estimate holds:
    \begin{equation}
        \label{lipconst:frechet}
        \norm{\D\mathcal{R}((\phib_0^1, \sigmab_0^1, \pb_0^1)) - \D\mathcal{R}((\phib_0^2, \sigmab_0^2, \pb_0^2))}^2_{\mathcal{L}(\VV \cap \Lx\infty^3, \HH)} \le C_0 \norm{(\phib_0^1, \sigmab_0^1, \pb_0^1) - (\phib_0^2, \sigmab_0^2, \pb_0^2)}^2_{\HH},
    \end{equation}
    for some constant $C_0 \gs 0$ depending only on the parameters of the system.
\end{theorem}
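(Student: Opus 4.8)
The plan is to take the linearised solution operator $L:(h,k,w)\mapsto(Y(T),Z(T),P(T))$ as the candidate derivative. By Proposition~\ref{prop:linearised} this is a well-defined bounded linear map $\HH\to\HH$, hence, composing with the continuous embedding $\VV\cap\Lx\infty^3\hookrightarrow\HH$, it belongs to $\mathcal{L}(\VV\cap\Lx\infty^3,\HH)$; this gives the required membership and, once \eqref{frechet:diff} is proved, the uniqueness of the derivative by the standard homogeneity argument. To verify \eqref{frechet:diff} I fix $(\phib_0,\sigmab_0,\pb_0)\in\mathcal{I}_R$ with solution $(\phib,\sigmab,\pb)$, a small increment $(h,k,w)$ (small enough that $(\phib_0+h,\sigmab_0+k,\pb_0+w)$ still lies in $\mathcal{I}_R$) with solution $(\phi,\sigma,p)$, and the linearised solution $(Y,Z,P)$ of \eqref{eq:philin}--\eqref{iclin}. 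Introducing the remainders $\psi:=\phi-\phib-Y$, $\theta:=\sigma-\sigmab-Z$, $\pi:=p-\pb-P$, I subtract the linearised system from the difference of the two copies of \eqref{eq:phi}--\eqref{ic} and Taylor-expand the nonlinearities at $(\phib,\sigmab)$. Writing $u:=\phi-\phib$ and $v:=\sigma-\sigmab$, this produces for $(\psi,\theta,\pi)$ precisely the linear system \eqref{eq:philin}--\eqref{iclin} with coefficients frozen at $(\phib,\sigmab,\pb)$, homogeneous initial data, and forcing given by the second-order Taylor remainders of $F'$ and of $g(\phi,\sigma):=m(\sigma)\hh'(\phi)$, together with the purely bilinear leftover $-\gamma_{ch}\,uv$ arising from the product $\sigma\phi$ in \eqref{eq:sigma}; the $p$-equation being already linear, its remainder is forced only by $\alpha_{ch}\psi$.

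The crux is that these forcing terms are quadratic in the increment. Since the constraints defining $\mathcal{I}_R$ keep $\phi,\phib,\sigma,\sigmab$ bounded in $\Lqt\infty$ with constants depending only on $R$, the relevant derivatives of $F$, $\hh$ and $m$ are bounded along the solutions, so that $|R_F|+|R_g|\le C(u^2+v^2)$ and $|uv|\le C(u^2+v^2)$ pointwise. Using the Sobolev embedding $V\hookrightarrow\Lx4$ (valid for $N=2,3$) and the strong continuous dependence estimate \eqref{contdep:eststrong} of Proposition~\ref{prop:strongcontdep}, I bound, e.g., $\int_0^T\norm{u^2}^2_H\,\de t=\int_0^T\norm{u}^4_{\Lx4}\,\de t\le C\norm{u}^2_{\C0{V_0}}\int_0^T\norm{u}^2_{V_0}\,\de t\le C\norm{(h,k,w)}^4_{\VV}$, and analogously for the $v$- and $uv$-contributions. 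Hence the total forcing is bounded by $C\norm{(h,k,w)}^2_{\VV}$ in $\LT2H$, and a standard energy estimate for the frozen-coefficient linear system (test by $(\psi,\theta,\pi)$ in $H$, exploit the coercive Laplacians and the $\Lqt\infty$ coefficients, and apply Gronwall) yields $\norm{(\psi(T),\theta(T),\pi(T))}_{\HH}\le C\norm{(h,k,w)}^2_{\VV}$. Dividing by $\norm{(h,k,w)}_{\VV}$ gives \eqref{frechet:diff} and confirms the identity \eqref{frechet:derivative}.

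For the Lipschitz continuity \eqref{lipconst:frechet} I fix a direction $(h,k,w)$ with $\norm{(h,k,w)}_{\VV\cap\Lx\infty^3}\le1$ and compare the two linearised solutions $(Y^i,Z^i,P^i)$, $i=1,2$, obtained by linearising around the solutions $(\phib^i,\sigmab^i,\pb^i)$ of the base data $(\phib_0^i,\sigmab_0^i,\pb_0^i)$ but with the same increment $(h,k,w)$. Their differences $\bar Y:=Y^1-Y^2$, $\bar Z:=Z^1-Z^2$, $\bar P:=P^1-P^2$ solve a linear system with homogeneous initial data: subtracting the two copies of \eqref{eq:philin}--\eqref{iclin} and splitting every product as a frozen term (coefficient at index~1 times the difference) plus a forcing term (difference of the coefficient times the index-2 solution), all frozen terms remain on the left, while each forcing term is controlled pointwise, via the Lipschitz continuity of $F''$, $\hh'$, $\hh''$, $m$, $m'$ on the relevant bounded ranges, by $C\big(\abs{\phib^1-\phib^2}+\abs{\sigmab^1-\sigmab^2}\big)\big(\abs{Y^2}+\abs{Z^2}\big)$. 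I then estimate its $\LT2H$ norm by $\int_0^T\big(\norm{\phib^1-\phib^2}^2_{\Lx4}+\norm{\sigmab^1-\sigmab^2}^2_{\Lx4}\big)\big(\norm{Y^2}^2_{\Lx4}+\norm{Z^2}^2_{\Lx4}\big)\de t$, bounding the linearised factors by their $\C0V$-norms $\le C\norm{(h,k,w)}_{\VV}$ (Proposition~\ref{prop:linearised}, higher regularity, and $V\hookrightarrow\Lx4$) and the coefficient differences by $\int_0^T\big(\norm{\phib^1-\phib^2}^2_{V_0}+\norm{\sigmab^1-\sigmab^2}^2_{V}\big)\de t\le C\norm{(\phib_0^1,\sigmab_0^1,\pb_0^1)-(\phib_0^2,\sigmab_0^2,\pb_0^2)}^2_{\HH}$, which is exactly the continuous dependence estimate \eqref{contdep:est}. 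A final energy estimate on the $(\bar Y,\bar Z,\bar P)$ system then gives $\norm{(\bar Y(T),\bar Z(T),\bar P(T))}^2_{\HH}\le C\norm{(h,k,w)}^2_{\VV}\,\norm{(\phib_0^1,\sigmab_0^1,\pb_0^1)-(\phib_0^2,\sigmab_0^2,\pb_0^2)}^2_{\HH}$, and taking the supremum over $\norm{(h,k,w)}_{\VV\cap\Lx\infty^3}\le1$ yields \eqref{lipconst:frechet}.

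I expect the main difficulty to lie in the quadratic control of the remainders in the second paragraph. The continuous dependence estimates only provide $u,v$ in norms of $\C0{V_0}\cap\LT2{V_0}$ type, so the quartic gain $\norm{(h,k,w)}^4_{\VV}$ cannot come from a pointwise-in-time $\Lx4$ bound but must be extracted by interpolating the $\Lx4$-integrability of $u^2$, $v^2$, $uv$ against an $L^\infty_t(V)\cdot L^2_t(V)$ splitting; it is precisely this splitting---now with the coefficient differences measured only in the weaker $\HH$-norm through \eqref{contdep:est}---that makes the Lipschitz bound \eqref{lipconst:frechet} hold in the asymmetric norms of its statement. The bookkeeping of the many product and composition differences, and the verification that the uniform $\Lqt\infty$ bounds survive on the whole neighbourhood $\mathcal{I}_R$, are the remaining technical points.
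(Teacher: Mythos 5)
Your proposal is correct and follows essentially the same route as the paper: the same remainder variables $\psi,\theta,\zeta$, the same Taylor expansion with quadratic remainders controlled pointwise by $u^2+v^2$, the same quartic gain via $V\hookrightarrow L^4(\Omega)$ combined with the strong continuous dependence estimate of Proposition~\ref{prop:strongcontdep}, and the same difference-of-linearised-systems energy argument with \eqref{contdep:est} for the Lipschitz bound. The only cosmetic deviation is in how the trilinear terms are split (you place the $L^4$ burden on $Y_2$ via its $\mathcal{C}^0([0,T];V)$ bound, whereas the paper keeps $Y_2$ in $H$ and absorbs the test function's $L^4$ norm into the coercive gradient terms), which does not affect the validity of the argument.
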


\begin{proof}
    We observe that it is sufficient to prove the result for any small enough perturbation $(h,k,w)$, i.e., we fix $\Lambda > 0$ and consider only perturbations such that
	\begin{equation}
		\label{hkw:bound}
		\norm{(h,k,w)}_{\Lx\infty^3} \le \Lambda.
	\end{equation} 
	Now, we fix $(\phib_0, \sigmab_0, \pb_0)$ and $(h,k,w)$ as above and consider
	\begin{align*}
		& (\phi, \sigma, p) \text{ as the solution to \eqref{eq:phi}--\eqref{ic} with data } (\phib_0 + h, \sigmab_0 + k, \pb_0 + w), \\
		& (\phib, \sigmab, \pb) \text{ as the solution to \eqref{eq:phi}--\eqref{ic} with data } (\phib_0, \sigmab_0, \pb_0), \\
		& (Y, Z, P) \text{ as the solution to \eqref{eq:philin}--\eqref{iclin} with data } (h,k,w).
	\end{align*}
    For the Fr\'echet-differentiability, it is enough to show that there exist a constant $C>0$, depending only on the parameters of the system and possibly on $\Lambda$, and an exponent $s > 2$ such that
	\[ \norm{ (\phi(T), \sigma(T), p(T)) - (\phib(T), \sigmab(T), \pb(T)) - (Y(T), Z(T), P(T)) }^2_{\HH} \le C \norm{(h,k,w)}^s_{\HH}. \]
    To do this, we introduce the additional variables
	\begin{align*}
		& \psi := \phi - \phib - Y \in \XX_0, \\
		& \theta := \sigma - \sigmab - Z \in \XX, \\
        & \zeta  := p - \pb - P \in \XX,
	\end{align*}
	which, by Theorem \ref{thm:wellposedness} and Proposition \ref{prop:linearised}, enjoy the regularities shown above. Then, this is equivalent to showing that
	\begin{equation}
		\label{frechet:aim}
		\norm{ (\psi(T), \theta(T), \zeta(T)) }_{\HH}^2 \le C \norm{(h,k,w)}^s_{\HH}.
	\end{equation}
    By inserting the equations solved by the variables in the definitions of $\psi$, $\theta$ and $\zeta$ and exploiting the linearity of the involved differential operators, we infer that $\psi$, $\theta$ and $\zeta$ satisfy:
	\begin{alignat}{2}
		& \partial_t \psi - \lambda \Delta \psi + F^h = M^h  && \text{in } Q_T,  \label{eq:phidiff}\\
		& \partial_t \theta - \eta \Delta \theta + \gamma_h \theta = Q^h + S_{ch} \psi   \qquad && \text{in } Q_T, \label{eq:sigmadiff} \\
        & \partial_t \zeta - D \Delta \zeta + \gamma_p \zeta = \alpha_{ch} \psi && \text{in } Q_T,  \label{eq:pdiff}
	\end{alignat}
	together with boundary and initial conditions:
	\begin{alignat}{2}
		& \psi = 0, \quad \partial_{\n} \theta = \partial_{\n} \zeta = 0 \qquad && \text{on } \Sigma_T, \label{bcdiff} \\
		& \psi(0) = 0, \quad \theta(0) = 0, \quad \zeta(0) = 0 \qquad && \text{in } \Omega, \label{icdiff}
	\end{alignat}
	where
	\begin{align*}
		F^h & = F'(\phi) - F'(\phib) - F''(\phib)Y, \\
        M^h & = m(\sigma) \hh'(\phi) - m(\sigmab) \hh'(\phib) - m(\sigmab) \hh''(\phib) Y - m'(\sigmab) \hh'(\phib) Z, \\
        Q^h & = \gamma_{ch} \sigma \phi - \gamma_{ch} \sigmab \phib - \gamma_{ch} \sigmab Y - \gamma_{ch} \phib Z.
	\end{align*} 
    Before going on, we can rewrite in a better way the terms $F^h$, $M^h$ and $Q^h$, by using the following version of Taylor's theorem with integral remainder for a real function $f \in \mathcal{C}^2$ at a point $x_0 \in \R$:
	\[ f(x) = f(x_0) + f'(x_0) (x-x_0) + \left( \int_0^1 (1-z) f''(x_0 + z(x-x_0)) \, \de z \right) (x-x_0)^2. \]
	Indeed, with straightforward calculations, one can see that
	\begin{align*}
		F^h = F''(\phib) \psi + R_1^h (\phi - \phib)^2,
	\end{align*}
	and also, up to adding and subtracting some additional terms, that
	\begin{align*}
        M^h & = (m(\sigma) - m(\sigmab))(\hh'(\phi)-\hh'(\phib)) + m(\sigmab)[\hh''(\phib) \psi + R_2^h (\phi - \phib)^2] \\
        & \quad + \hh'(\phi) [m'(\sigmab) \theta + R_3^h (\sigma - \sigmab)^2], \\
		Q^h & = \gamma_{ch} (\sigma - \sigmab)(\phi - \phib) + \gamma_{ch} \sigmab \psi + \gamma_{ch} \phib \theta,
	\end{align*}
	where
	\begin{gather*}
		R_1^h = \int_0^1 (1-z) F'''(\phib + z(\phi - \phib)) \, \de z, \quad R_2^h = \int_0^1 (1-z) \hh'''(\phib + z(\phi - \phib)) \, \de z, \\
        R_3^h = \int_0^1 (1-z) m''(\sigmab + z(\sigma - \sigmab)) \, \de z.
	\end{gather*}
    By using global boundedness of the variables, given by Theorem \ref{thm:wellposedness}, one can easily see that there exists a constant $C \gs 0$, depending only on the parameters of the system and on $\Lambda$, such that 
	\begin{equation}
		\label{remainder}
		\norm{R_1^h}_{L^\infty(Q_T)}, \norm{R_2^h}_{L^\infty(Q_T)}, \norm{R_3^h}_{L^\infty(Q_T)} \le C.
	\end{equation} 
    In order to show \eqref{frechet:aim}, we now proceed with a priori estimates on the system \eqref{eq:phidiff}--\eqref{icdiff}. Indeed, we test \eqref{eq:phidiff} by $\psi$, \eqref{eq:sigmadiff} by $\theta$, \eqref{eq:pdiff} by $\zeta$ and sum them up to obtain:
    \begin{equation}
    \label{frechet:test1}
        \begin{split}
            & \mezzo \ddt \left( \norm{\phi}^2_H + \norm{\sigma}^2_H + \norm{p}^2_H \right) + \lambda \norm{\nabla \phi}^2_H + \eta \norm{\nabla \sigma}^2_H + D \norm{\nabla p}^2_H + \gamma_h \norm{\theta}^2_H + \gamma_p \norm{\zeta}^2_H \\
            & \quad = - (F^h, \psi)_H + (M^h, \psi)_H + (Q^h, \theta)_H + S_{ch} (\psi, \theta)_H + \alpha_{ch} (\psi, \zeta)_H. 
        \end{split}
    \end{equation}
    The last two terms on the right-hand side of \eqref{frechet:test1} can easily be treated by means of Cauchy-Schwarz and Young's inequalities, therefore we focus on estimating the first three. Indeed, by using again H\"older and Young's inequalities, together with \eqref{remainder}, the global boundedness of $(\phib, \sigmab, \pb)$ given by Theorem \ref{thm:wellposedness}, the regularity and local Lipschitz continuity of $F$, $m$ and $\hh$ and the Sobolev embedding $V \hookrightarrow \Lx4$, we infer that
    {\allowdisplaybreaks
    \begin{align*}
        (F^h, \psi)_H & = \int_\Omega F''(\phib) \psi^2 + R_1^h (\phi - \phib)^2 \psi \, \de x\le C \norm{\psi}^2_H + C \norm{\phi - \phib}^4_{\Lx4} \\
        & \le C \norm{\psi}^2_H + C \norm{\phi - \phib}^4_{V_0}, \\
        (M^h, \psi)_H & = \int_\Omega (m(\sigma) - m(\sigmab))(\hh'(\phi) - \hh'(\phib)) \psi \, \de x + \int_\Omega m(\sigmab)[\hh''(\phib) \psi + R_2^h (\phi - \phib)^2] \psi \, \de x \\
        & \quad + \int_\Omega \hh'(\phi) [m'(\sigmab) \theta + R_3^h (\sigma - \sigmab)^2] \psi \, \de x \\
        & \le C \norm{\psi}^2_H + C \norm{\theta}^2_H + C \norm{\phi-\phib}^4_{\Lx4} + C \norm{\sigma - \sigmab}^4_{\Lx4} \\
        & \le C \norm{\psi}^2_H + C \norm{\theta}^2_H + C \norm{\phi-\phib}^4_{V_0} + C \norm{\sigma - \sigmab}^4_V, \\
        (Q^h, \theta)_H & = \gamma_{ch} \int_\Omega (\sigma - \sigmab)(\phi - \phib)\theta + \sigmab \psi \theta + \phib \theta^2 \, \de x \\
        & \le C \norm{\theta}^2_H + C \norm{\psi}^2_H + C \norm{\sigma - \sigmab}^2_{\Lx4} \norm{\phi - \phib}^2_{\Lx4} \\
        & \le C \norm{\psi}^2_H + C \norm{\theta}^2_H + C \norm{\phi-\phib}^4_{V_0} + C \norm{\sigma - \sigmab}^4_V.
    \end{align*}
    }
    Then, by integrating on $(0,t)$ for any $t \in (0,T)$, we get that
    \begin{align*}
        & \mezzo \norm{\psi(t)}^2_H + \mezzo \norm{\theta(t)}^2_H + \mezzo \norm{\zeta(t)}^2_H + \lambda \int_0^t \norm{\nabla \psi}^2_H \, \de s + \eta \int_0^t \norm{\nabla \theta}^2_H \, \de s + D \int_0^t \norm{\nabla \zeta}^2_H \, \de s \\
        & \quad \le C \int_0^T \norm{\psi}^2_H \, \de s + C \int_0^T \norm{\theta}^2_H \, \de s + C \int_0^T \norm{\zeta}^2_H \, \de s 
                + C \int_0^T \norm{\phi-\phib}^4_{V_0} \, \de s + C \int_0^T \norm{\sigma - \sigmab}^4_V \, \de s.
    \end{align*}
    Therefore, by applying Gronwall's lemma and the continuous dependence estimate found in Proposition \ref{prop:strongcontdep}, we conclude that
    \begin{equation}
        \label{frechet:energyest}
        \begin{split}
        & \norm{\psi}^2_{\LT\infty H \cap \LT 2 {V_0}} + \norm{\theta}^2_{\LT\infty H \cap \LT 2 V} \\
        & \quad + \norm{\zeta}^2_{\LT\infty H \cap \LT 2 V} \le C \left( \norm{h}^4_{V_0} + \norm{k}^4_V + \norm{w}^4_V \right).
        \end{split}
    \end{equation}
    Moreover, by comparison in \eqref{eq:phidiff}, \eqref{eq:sigmadiff} and \eqref{eq:pdiff}, starting from \eqref{frechet:energyest}, we also  deduce that 
    \begin{equation}
        \label{frechet:timeest}
        \norm{\psi}^2_{\HT 1 {V_0^*}} + \norm{\theta}^2_{\HT 1 {V^*}} + \norm{\zeta}^2_{\HT 1 {V^*}} \le C \left( \norm{h}^4_{V_0} + \norm{k}^4_V + \norm{w}^4_V \right).
    \end{equation}
    Therefore, due to the standard embedding $H^1(0,T;\VV^*) \cap L^2(0,T;\VV) \hookrightarrow \Ccal^0([0,T]; \HH)$, estimates \eqref{frechet:energyest} and \eqref{frechet:timeest} imply that 
    \[ \norm{(\psi(T), \theta(T), \zeta(T))}^2_{\HH} \le C \norm{(h,k,w)}^4_{\VV}, \]
    which is exactly \eqref{frechet:aim} with $s = 4 \gs 2$. Then, the operator $\Rcal$ is Fr\'echet-differentiable and its derivative is fully characterised.

    Next, we want to prove that the Fr\'echet derivative is Lipschitz continuous. To do this, we show that, given two sets of initial data $(\phib_0^i, \sigmab_0^i, \pb_0^i) \in \Iad$, $i=1,2$, with corresponding solutions $(\phib_i, \sigmab_i, \pb_i)$, $i=1,2$, then for any $(h,k,w) \in \VV \cap \Lx\infty^3$ it holds that
    \begin{equation}
        \label{c1class:aim}
        \begin{split}
            & \norm{\D\Rcal((\phib_0^1, \sigmab_0^1, \pb_0^1))[(h,k,w)] - \D\Rcal((\phib_0^2, \sigmab_0^2, \pb_0^2))[(h,k,w)]}^2_{\HH} \\
            & \quad = \norm{(Y_1(T), Z_1(T), P_1(T)) - (Y_2(T), Z_2(T), P_2(T))}^2_{\HH} \\
            & \le C_0 \norm{(\phib_0^1, \sigmab_0^1, \pb_0^1) - (\phib_0^2, \sigmab_0^2, \pb_0^2)}^2_{\HH}.
        \end{split}
    \end{equation}
    To prove \eqref{c1class:aim}, we consider the system solved by the differences $Y = Y_1 - Y_2$, $Z = Z_1 - Z_2$ and $P = P_1 - P_2$ of the corresponding solutions of the linearised system \eqref{eq:philin}--\eqref{iclin}, which, up to adding and subtracting some terms, has the following form: 
    {\allowdisplaybreaks
    \begin{alignat}{2}
        & \partial_t Y - \lambda \Delta Y + F''(\phib_1) Y + (F''(\phib_1) - F''(\phib_2))Y_2 \nonumber\\
        & \quad = m(\sigmab_1)\hh''(\phib_1) Y + (m(\sigmab_1) - m(\sigmab_2))\hh''(\phib_1) Y_2 \nonumber\\
        & \qquad + m(\sigmab_2)(\hh''(\phib_1) - \hh''(\phib_2))Y_2 + m'(\sigmab_1)\hh'(\phib_1) Z \nonumber\\
        & \qquad + (m'(\sigmab_1) - m'(\sigmab_2))\hh'(\phib_1) Z_2 + m'(\sigmab_1) (\hh'(\phib_1) - \hh'(\phib_2)) Z_2 \qquad && \hbox{in $Q_T$,} \label{eq:phic1} \\
        & \partial_t Z - \eta \Delta Z + \gamma_h Z + \gamma_{ch} \sigmab_1 Y + \gamma_{ch} (\sigmab_1 - \sigmab_2) Y_2 \nonumber\\
        & \qquad + \gamma_{ch} \phib_1 Z + \gamma_{ch} (\phib_1 - \phib_2) Z_2 = S_{ch} Y && \hbox{in $Q_T$,} \label{eq:sigmac1} \\
        & \partial_t P - D \Delta P + \gamma_p P = \alpha_{ch} Y && \hbox{in $Q_T$,} \label{eq:pc1} \\
        & Y = 0, \quad \partial_{\n} Z = \partial_{\n} P = 0 && \hbox{on $\Sigma_T$,} \label{bcc1} \\
        & Y(0) = 0, \quad Z(0) = 0, \quad P(0) = 0 && \hbox{in $\Omega$.} \label{icc1}
    \end{alignat}
    }
    Then, we test \eqref{eq:phic1} by $Y$, \eqref{eq:sigmac1} by $Z$, \eqref{eq:pc1} by $P$ and sum them up to obtain:
    \begin{equation}
        \label{c1class:test}
        \begin{split}
            & \mezzo \ddt \left( \norm{Y}^2_H + \norm{Z}^2_H + \norm{P}^2_H \right)\\ 
            & \qquad + \lambda \norm{\nabla Y}^2_H + \eta \norm{\nabla Z}^2_H + D \norm{\nabla P}^2_H + \gamma_h \norm{Z}^2_H + \gamma_p \norm{P}^2_H \\
            & \quad = (F''(\phib_1) Y, Y)_H + ((F''(\phib_1) - F''(\phib_2))Y_2, Y)_H + (m(\sigmab_1)\hh''(\phib_1) Y, Y)_H \\
            & \qquad + ((m(\sigmab_1) - m(\sigmab_2))\hh''(\phib_1) Y_2, Y)_H + (m(\sigmab_2)(\hh''(\phib_1) - \hh''(\phib_2))Y_2, Y)_H \\
            & \qquad + (m'(\sigmab_1)\hh'(\phib_1) Z, Y) + ((m'(\sigmab_1) - m'(\sigmab_2))\hh'(\phib_1) Z_2, Y)_H \\
            & \qquad + (m'(\sigmab_1) (\hh'(\phib_1) - \hh'(\phib_2)) Z_2, Y)_H - \gamma_{ch} (\sigmab_1 Y, Z)_H - \gamma_{ch} ((\sigmab_1 - \sigmab_2) Y_2, Z)_H \\
            & \qquad - \gamma_{ch} (\phib_1 Z, Z)_H - \gamma_{ch} ((\phib_1 - \phib_2) Z_2, Z)_H + (S_{ch} Y, Z)_H + \alpha_{ch} (Y,P)_H.
        \end{split}
    \end{equation}
    Now, by using H\"older and Young's inequalities, together with the global boundedness of $(\phib_i, \sigmab_i, \pb_i)$ given by Theorem \ref{thm:wellposedness}, the regularity and local Lipschitz continuity of $F$, $m$ and $\hh$ and the Sobolev embedding $V \hookrightarrow \Lx4$, we can easily estimate all the terms on the right-hand side of \eqref{c1class:test} in the following way:
    \begingroup
    \allowdisplaybreaks
    \begin{align*}
        & (F''(\phib_1) Y, Y)_H + (m(\sigmab_1)\hh''(\phib_1) Y, Y)_H + (m'(\sigmab_1)\hh'(\phib_1) Z, Y) \\
        & \qquad - \gamma_{ch} (\sigmab_1 Y, Z)_H - \gamma_{ch} (\phib_1 Z, Z)_H + (S_{ch} Y, Z)_H + \alpha_{ch} (Y,P)_H \\
        & \quad \le C \norm{Y}^2_H + C \norm{Z}^2_H + C \norm{P}^2_H, \\
        & ((F''(\phib_1) - F''(\phib_2))Y_2, Y)_H + ((m(\sigmab_1) - m(\sigmab_2))\hh''(\phib_1) Y_2, Y)_H \\
        & \qquad + (m(\sigmab_2)(\hh''(\phib_1) - \hh''(\phib_2))Y_2, Y)_H + ((m'(\sigmab_1) - m'(\sigmab_2))\hh'(\phib_1) Z_2, Y)_H \\
        & \qquad + (m'(\sigmab_1) (\hh'(\phib_1) - \hh'(\phib_2)) Z_2, Y)_H - \gamma_{ch} ((\sigmab_1 - \sigmab_2) Y_2, Z)_H - \gamma_{ch} ((\phib_1 - \phib_2) Z_2, Z)_H \\
        & \quad \le C \norm{\phib_1 - \phib_2}_{\Lx4} \norm{Y_2}_H \norm{Y}_{\Lx4} + C \norm{\sigmab_1 - \sigmab_2}_{\Lx4} \norm{Y_2}_H \norm{Y}_{\Lx4} \\
        & \qquad + C \norm{\sigmab_1 - \sigmab_2}_{\Lx4} \norm{Z_2}_H \norm{Y}_{\Lx4} + C \norm{\phib_1 - \phib_2}_{\Lx4} \norm{Z_2}_H \norm{Y}_{\Lx4} \\
        & \qquad + C \norm{\sigmab_1 - \sigmab_2}_{\Lx4} \norm{Y_2}_H \norm{Z}_{\Lx4} + C \norm{\phib_1 - \phib_2}_{\Lx4} \norm{Z_2}_H \norm{Z}_{\Lx4} \\
        & \quad \le \frac{\lambda}{2} \norm{Y}^2_{V_0} + \frac{\eta}{2} \norm{Z}^2_V + C \underbrace{\left( \norm{Y_2}^2_H + \norm{Z_2}^2_H \right)}_{\in \, \Lt\infty} \norm{\phib_1 - \phib_2}^2_{V_0} + C \underbrace{\left( \norm{Y_2}^2_H + \norm{Z_2}^2_H \right)}_{\in \, \Lt\infty} \norm{\sigmab_1 - \sigmab_2}^2_V, 
    \end{align*}
    \endgroup
    where $\norm{Y_2}^2_H$ and $\norm{Z_2}^2_H$ are uniformly bounded in $\Lt\infty$ by Proposition \ref{prop:linearised}. Therefore, by also integrating on $(0,t)$, for any $t \in (0,T)$, starting from \eqref{c1class:test}, we deduce that
    \begin{align*}
        & \mezzo \left( \norm{Y(t)}^2_H + \norm{Z(t)}^2_H + \norm{P(t)}^2_H \right) + \frac{\lambda}{2} \int_0^t \norm{\nabla Y}^2_H \, \de s + \frac{\eta}{2} \int_0^t \norm{\nabla Z}^2_H \, \de s + \frac{D}{2} \int_0^t \norm{\nabla P}^2_H \, \de s \\
        & \quad \le C \int_0^T \norm{Y}^2_H + \norm{Z}^2_H + \norm{P}^2_H \, \de s + C \int_0^T \norm{\phib_1 - \phib_2}^2_{V_0} + \norm{\sigmab_1 - \sigmab_2}^2_V \, \de s. 
    \end{align*}
    Hence, by using Gronwall's lemma and the continuous dependence estimate \eqref{contdep:est}, we infer that 
    \begin{equation}
        \label{c1class:energyest}
        \begin{split}
        & \norm{Y}^2_{\LT\infty H \cap \LT 2 {V_0}} + \norm{Z}^2_{\LT\infty H \cap \LT 2 V} \\
        & \quad + \norm{P}^2_{\LT\infty H \cap \LT 2 V} \le C \norm{(\phib_0^1, \sigmab_0^1, \pb_0^1) - (\phib_0^2, \sigmab_0^2, \pb_0^2)}^2_{\HH}.
        \end{split}
    \end{equation}
    Moreover, by comparison in \eqref{eq:phic1}, \eqref{eq:sigmac1} and \eqref{eq:pc1}, starting from \eqref{c1class:energyest}, we also deduce that 
    \begin{equation}
        \label{c1class:timeest}
        \norm{Y}^2_{\HT 1 {V_0^*}} + \norm{Z}^2_{\HT 1 {V^*}} + \norm{P}^2_{\HT 1 {V^*}} \le C \norm{(\phib_0^1, \sigmab_0^1, \pb_0^1) - (\phib_0^2, \sigmab_0^2, \pb_0^2)}^2_{\HH}.
    \end{equation}
    Therefore, due to the standard embedding $H^1(0,T;\VV^*) \cap L^2(0,T;\VV) \hookrightarrow \Ccal([0,T]; \HH)$, estimates \eqref{c1class:energyest} and \eqref{c1class:timeest} allow us to say that 
    \begin{equation*}
        \norm{(Y(T), Z(T), P(T))}^2_{\HH} \le C \norm{(\phib_0^1, \sigmab_0^1, \pb_0^1) - (\phib_0^2, \sigmab_0^2, \pb_0^2)}^2_{\HH},
    \end{equation*}
    which immediately implies \eqref{c1class:aim}. Note that all this procedure is independent of the particular choice of $(h,k,w) \in \VV \cap \Lx\infty^3$, thus by taking the supremum in $(h,k,w)$ one recovers exactly \eqref{lipconst:frechet}. This concludes the proof of Theorem \ref{thm:frechet}.
\end{proof}

\begin{remark}
    Note that, by Proposition \ref{prop:linearised}, the solution to the linearised system \eqref{eq:philin}--\eqref{iclin} is well-defined for any $(h,k,w) \in \VV$, without additionally asking that $(h,k,w) \in \VV \cap \Lx\infty^3$. This means that, for any fixed $(\phib_0, \sigmab_0, \pb_0) \in \Iad$, the Fr\'echet-derivative $\D\Rcal(\phib_0, \sigmab_0, \pb_0)$ can be extended to a continuous linear operator on the whole $\VV$.
\end{remark}

\section{Analysis of the inverse problem}
\label{sec3}

We devote this section to the analysis of the inverse problem of reconstructing the initial data, given a measurement at the final time.
The first step would be to prove a backward uniqueness result for the system \eqref{eq:phi}--\eqref{ic}, namely to show that if $(\phi_1, \sigma_1, p_1)$ and $(\phi_1, \sigma_1, p_1)$ are two solutions such that $(\phi_1, \sigma_1, p_1)(T) = (\phi_2, \sigma_2, p_2)(T)$, then $(\phi_1, \sigma_1, p_1)(t) = (\phi_2, \sigma_2, p_2)(t)$ for any $t \in [0,T]$. In particular, this would imply that the map $\mathcal{R}$ is injective i.e. uniqueness for the inverse problem in $\Iad$.
However, following a logarithmic convexity approach, we prove a conditional stability result which consequently implies uniqueness as a byproduct. Namely, assuming the initial data to belong to $\Iad$, we prove an H\"older stability estimate for $\norm{(\phi_1(t), \sigma_1(t), p_1(t)) - (\phi_2(t), \sigma_2(t), p_2(t)}_{\HH}$ for any $t \gs 0$. To establish this, we exploit the global bounds on the solutions of \eqref{eq:phi}--\eqref{ic}, obtained in Theorem \ref{thm:wellposedness}. %\elena{possiamo specificare quali stime si usano?} 
%\matteo{Non lo so, nella dimostrazione viene già specificato volta per volta...}
In particular, in the sequel we will denote by $M > 0$ the minimal constant such that 
\begin{equation}
    \label{bound:c0h}
    \norm{(\phi, \sigma, p)}_{\C 0 {\HH}} \le M,
\end{equation}
uniformly for $(\phi_0, \sigma_0, p_0) \in \Iad$. 
To prove our conditional stability result, we follow \cite[Theorem 3.1.3]{isakov}, which is actually a simpler version of the results contained in \cite{AN1967}. Indeed, we can prove:

\begin{proposition}
	\label{prop:condstab}
	Assume hypotheses \ref{ass:coeff}--\ref{ass:m}. Let $(\phi_1, \sigma_1, p_1)$ and $(\phi_2, \sigma_2, p_2)$ be two solutions of \eqref{eq:phi}--\eqref{ic} corresponding to two triples of initial data $(\phi_0^i, \sigma_0^i, p_0^i) \in \Iad$ for $i=1,2$.
	
	Then, there exists a constant $C_1 \gs 0$, depending only on the parameters of the system, such that for any $t \in [0,T]$ the following estimate holds:
	% \begin{align*}
	% 	& \norm{(\phi_1(t), \sigma_1(t), p_1(t)) - (\phi_2(t), \sigma_2(t), p_2(t))}_{\HH} \\
	% 	& \quad \le C_1 \norm{ (\phi_0^1, \sigma_0^1, p_0^1) - (\phi_0^2, \sigma_0^2, p_0^2) }_{\HH}^{1-\lambda} \norm{(\phi_1(T), \sigma_1(T), p_1(T)) - (\phi_2(T), \sigma_2(T), p_2(T))}_{\HH}^\lambda,
	% \end{align*}
    \begin{equation}
		\label{eq:condstab}
		\begin{split}
		& \norm{(\phi_1(t), \sigma_1(t), p_1(t)) - (\phi_2(t), \sigma_2(t), p_2(t))}_{\HH} \\
		& \quad \le C_1 M^{1-\lambda(t)} \norm{(\phi_1(T), \sigma_1(T), p_1(T)) - (\phi_2(T), \sigma_2(T), p_2(T))}_{\HH}^{\lambda(t)}.
		\end{split}
	\end{equation}
	where $\lambda = \lambda(t) \ge 0$ can be either 
	\begin{equation}
		\label{lambda:choice}
		\lambda = \lambda_1(t) = \frac{1 - e^{-\gamma t}}{1 - e^{- \gamma T}} \quad \text{or} \quad \lambda = \lambda_2(t) = \frac{e^{\gamma t} - 1}{e^{\gamma T} -1},
	\end{equation}
	for any $t \in [0,T]$, with $\gamma \gs 0$ depending only on the fixed parameters of the system.
	% Moreover, if we assume to know a priori that 
	% \[ \norm{(\phi_0^i, \sigma_0^i, p_0^i)}_{\HH} \le M, \quad \hbox{for  $i=1,2$ and some $M \gs 0$}, \] 
	% then we have the conditional stability estimate:
	% \begin{equation}
	% 	\label{eq:condstab}
	% 	\begin{split}
	% 	& \norm{(\phi_1(t), \sigma_1(t), p_1(t)) - (\phi_2(t), \sigma_2(t), p_2(t))}_{\HH} \\
	% 	& \quad \le C_1 M^{1-\lambda} \norm{(\phi_1(T), \sigma_1(T), p_1(T)) - (\phi_2(T), \sigma_2(T), p_2(T))}_{\HH}^\lambda.
	% 	\end{split}
	% \end{equation}
\end{proposition}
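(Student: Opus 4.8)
The plan is to reduce the difference of the two solutions to a linear parabolic system with bounded coefficients and then run the logarithmic convexity method in the abstract Hilbert-space formulation of \cite{AN1967}, following the scalar presentation in \cite[Theorem 3.1.3]{isakov}. First I would set $\vec u := (\phi_1 - \phi_2, \sigma_1 - \sigma_2, p_1 - p_2)$ and subtract the two copies of \eqref{eq:phi}--\eqref{ic}. Since the initial data lie in $\Iad \subset \VV$, Theorem \ref{thm:wellposedness} guarantees that both triples are strong solutions with $0 \le \phi_i \le 1$ and $\sigma_i, p_i \in \Lqt\infty$, so every nonlinear increment can be linearised: writing for instance $F'(\phi_1) - F'(\phi_2) = a_\phi\,(\phi_1 - \phi_2)$ with $a_\phi = \int_0^1 F''(\phi_2 + z(\phi_1-\phi_2))\,\de z$, and similarly for $\hh'$ and $m$, the global bounds together with the local Lipschitz continuity of $F'$, $\hh'$, $m$ from \ref{ass:Fh}--\ref{ass:m} show that $\vec u$ solves $\partial_t \vec u = \mathcal D\,\Delta \vec u + B(x,t)\vec u$ in $Q_T$, with $\mathcal D = \diag(\lambda,\eta,D)$, subject to the mixed boundary conditions \eqref{bc} and initial datum $\vec u(0) = (\phi_0^1-\phi_0^2, \sigma_0^1-\sigma_0^2, p_0^1-p_0^2)$, where $B$ is a matrix of multiplication operators bounded in $\Lqt\infty$ by a constant $K$ depending only on $M$ and the parameters.

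Regarding this as an abstract evolution equation $\vec u' = -A\vec u + B(t)\vec u$ in $\HH$, the structural fact on which everything rests is that the principal part $A := -\mathcal D\Delta$, with domain $W_0 \times W \times W$, is self-adjoint and nonnegative on $\HH$: integrating by parts and using \eqref{bc} (Dirichlet on $\phi$, Neumann on $\sigma,p$) makes the diagonal, positively weighted Laplacian a symmetric nonnegative form with respect to the standard inner product of $\HH$, while $B(t)$ is uniformly bounded. I would then run the logarithmic convexity computation on $f(t) := \norm{\vec u(t)}_{\HH}^2$. On the set where $f>0$, differentiating twice and using $\vec u' = -A\vec u + B\vec u$ together with the self-adjointness of $A$ produces an identity whose leading term is the nonnegative Cauchy--Schwarz defect $\norm{\vec u'}_\HH^2\,\norm{\vec u}_\HH^2 - \duality{\vec u', \vec u}_\HH^2 \ge 0$. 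Every remaining contribution carries a factor of $B$ and is controlled by $K$: in the terms containing $\vec u'$, I would decompose $\vec u'$ into its components parallel and orthogonal to $\vec u$, absorbing the tangential part into $|f'|f$ and the normal part into the Cauchy--Schwarz defect by Young's inequality. This yields a differential inequality of logarithmic convexity type, namely $f f'' - (f')^2 \ge -c_1\,|f'|\,f - c_2\,f^2$, equivalently $\omega'' \ge -c_1|\omega'| - c_2$ for $\omega := \log f$, with $c_1,c_2$ depending only on $K$, hence on the parameters and on $M$.

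Finally I would convert this inequality into the stated bound by comparison, as in \cite[Theorem 3.1.3]{isakov}, with the solutions of the linear ODEs $y'' = -\gamma y'$ and $y'' = \gamma y'$, whose graphs are affine in $e^{-\gamma t}$ and in $e^{\gamma t}$ respectively and whose normalised interpolants are exactly $\lambda_1$ and $\lambda_2$ of \eqref{lambda:choice}; the two admissible exponents correspond to these two comparison functions. This gives $\log f(t) \le (1-\lambda(t))\log f(0) + \lambda(t)\log f(T) + \log C_1^2$ for $\lambda \in \{\lambda_1,\lambda_2\}$ and $\gamma$ depending only on $c_1,c_2$. Exponentiating and using the a priori bound $\norm{\vec u(0)}_\HH \le 2M$ from \eqref{bound:c0h}, so that $f(0) \le 4M^2$, yields $\norm{\vec u(t)}_\HH \le C_1\, M^{1-\lambda(t)}\,\norm{\vec u(T)}_\HH^{\lambda(t)}$, which is \eqref{eq:condstab}; the degenerate case $f\equiv 0$ is trivial, and the non-degeneracy on the interior (hence backward uniqueness) comes out as a byproduct.

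I expect the main obstacle to be the second step: controlling the perturbation terms in the log-convexity identity without assuming any time regularity of $B$. The delicate point is that the Cauchy--Schwarz defect can vanish, precisely when $\vec u'$ is parallel to $\vec u$, so one cannot naively absorb a multiple of $\norm{\vec u'}_\HH^2\norm{\vec u}_\HH^2$ into it; the orthogonal decomposition of $\vec u'$ is exactly what makes the absorption legitimate and avoids any appearance of $\partial_t B$. Carefully tracking the constants through this absorption is also what produces the explicit dependence of $C_1$ and $\gamma$ on $M$ and on the fixed parameters of the system asserted in the statement.
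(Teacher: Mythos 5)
Your proposal is correct and follows essentially the same route as the paper: reduce the difference of the two solutions to a linear parabolic system whose zero-order coefficients are bounded in $L^\infty(Q_T)$ thanks to the uniform bounds of Theorem \ref{thm:wellposedness}, exploit that $-\vD\Delta$ with the mixed Dirichlet/Neumann conditions is self-adjoint and nonnegative on $\HH$, and run the Agmon--Nirenberg/Isakov logarithmic convexity comparison with the ODEs $y''=\pm\gamma y'$ to produce the two exponents $\lambda_1,\lambda_2$ and then exponentiate using $\norm{\vpsi(0)}_{\HH}\le 2M$. The one point to adjust is your claimed resolution of the ``no time regularity of $B$'' obstacle: the orthogonal decomposition of $\vec u'$ takes care of the cross terms with $A$, but what actually prevents any differentiation of $B$ in time is applying the convexity argument to the corrected functional $l=\log\norm{\vpsi}^2_{\HH}-\int_0^t 2(f_{\vpsi},\vpsi)_{\HH}/\norm{\vpsi}^2_{\HH}\,\de s$ (as in Isakov's Lemma 3.1.4 and in the paper's proof) rather than to $\log\norm{\vpsi}^2_{\HH}$ itself; since the subtracted integral is uniformly bounded by $2CT$, the stated estimate for $\log\norm{\vpsi(t)}_{\HH}$ is recovered unchanged.
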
 

\begin{proof}
    We call $\phi = \phi_1 - \phi_2$, $\sigma = \sigma_1 - \sigma_2$, $p= p_1 - p_2$ and observe that they satisfy the system:
	\begin{alignat}{2}
		& \partial_t \phi - \lambda \Delta \phi = f_\phi
		&& \hbox{in $Q_T$,} \label{eq:phi2} \\
		& \partial_t \sigma - \eta \Delta \sigma = f_\sigma 
		&& \hbox{in $Q_T$,} \label{eq:sigma2} \\
		& \partial_t p - D \Delta p = f_p
		&& \hbox{in $Q_T$,} \label{eq:p2} \\
		& \phi = 0, \quad \partial_{\n} \sigma = \partial_{\n} p = 0 
		&& \hbox{in $\Sigma_T$,} \label{bc2} \\
		& \phi(0) = \phi_0^1 - \phi_0^2, \quad \sigma(0) = \sigma_0^1 - \sigma_0^2, \quad p(0) = p_0^1 - p_0^2
		\qquad && \hbox{in $\Omega$,} \label{ic2}
	\end{alignat}
	where
	\begin{align*}
		& f_\phi := - ( F'(\phi_1) - F'(\phi_2) ) + m(\sigma_1) (\hh'(\phi_1) - \hh'(\phi_2)) + (m(\sigma_1) - m(\sigma_2)) \hh'(\phi_2), \\
		& f_\sigma := - \gamma_h \sigma - \gamma_{ch} \sigma_1 (\phi_1 - \phi_2) - \gamma_{ch} (\sigma_1 - \sigma_2) \phi_2 + S_{ch} \phi, \\
		& f_p := - \gamma_p p + \alpha_{ch} \phi. 
	\end{align*}
	We start by estimating the terms on the right-hand side. Indeed, by using the fact that $\phi_i, \sigma_i$, $i=1,2$, are uniformly bounded in $\Lqt\infty$ and $F, \hh \in \mathcal{C}^2(\R)$, $m\in \mathcal{C}^1(\R)$ and thus locally Lipschitz, we can infer that
	\begin{align*}
		\norm{f_\phi}_H & \le \norm{F'(\phi_1) - F'(\phi_2)}_H + \norm{m(\sigma_1)}_\infty \norm{\hh'(\phi_1) - \hh'(\phi_2)}_H
  + \norm{\hh'(\phi_2)}_\infty \norm{m(\sigma_1) - m(\sigma_2)}_H \\ 
			& \le C \left( \norm{\phi}_H + \norm{\sigma}_H \right).
	\end{align*}
	Similarly, we can also deduce that
	\begin{align*}
		\norm{f_\sigma}_H & \le \gamma_h \norm{\sigma}_H + \gamma_{ch} \norm{\sigma_1}_\infty \norm{\phi_1 - \phi_2}_H + \gamma_{ch} \norm{\phi_2}_\infty \norm{\sigma_1 - \sigma_2}_H + S_{ch} \norm{\phi}_H \\
		& \le C \left( \norm{\sigma}_H + \norm{\phi}_H \right), 
	\end{align*}
	and that
	\[ \norm{f_p}_H \le C \left( \norm{p}_H + \norm{\phi}_H \right). \]
	Therefore, in the end, we showed that 
	\begin{equation}
		\label{backuniq:eq1}
		\norm{f_\phi}_H + \norm{f_\sigma}_H + \norm{f_p}_H \le C \left( \norm{\phi}_H + \norm{\sigma}_H + \norm{p}_H \right), \quad
  \text{for a.e. } t\in [0,T].
	\end{equation}
	% It is easy to see that we also have the following bounds from below for a.e.~$t\in [0,T]$: 
	% \begin{equation}
	% 	\label{backuniq:eq2}
	% 	\begin{split}
	% 		& (f_\phi, \phi)_H \ge - C \norm{\phi}^2_H - C \norm{\phi}_H \norm{\sigma}_H, \\
	% 		& (f_\sigma, \sigma)_H \ge - C \norm{\sigma}^2_H - C \norm{\phi}_H \norm{\sigma}_H, \\
	% 		& (f_p, p)_H \ge - C \norm{p}^2_H - C \norm{p}_H \norm{\phi}_H.
	% 	\end{split}
	% \end{equation}
	To simplify notations, we now introduce a vector-valued formulation by defining: 
	\begin{align*}
		& \vpsi := (\phi, \sigma, p), \quad \nabla \vpsi := (\nabla \phi, \nabla \sigma, \nabla p), \\  & \Delta \vpsi := (\Delta \phi, \Delta \sigma, \Delta p), \quad f_{\vpsi} := (f_\phi, f_\sigma, f_p), \quad \vD := \diag (\lambda, \eta, D).
	\end{align*}
	Then, the previous estimate \eqref{backuniq:eq1} now becomes
	\begin{equation}
	\label{backuniq:fpsi}
		\norm{f_{\vpsi}}_{\HH} \le C \norm{\vpsi}_{\HH}.
        %\quad (f_{\vpsi}, \vpsi)_{\HH} \ge - C \norm{\vpsi}^2_{\HH}.
	\end{equation}
    Thus, we have shown that the system solved by $(\phi, \sigma, p)$ has the form
	\[ \partial_t \vpsi - \vD \Delta \vpsi = f_{\vpsi}, \hbox{ with $\norm{f_{\vpsi}}_{\HH} \le C \norm{\vpsi}_{\HH}$}, \]
	where $C$ depends only on the parameters of the system and on $\norm{\phi_0^i}_\infty, \norm{\sigma_0^i}_\infty$, $i=1,2$.
	Next, recall that $\vpsi \in \XX_0 \times \XX \times \XX$, therefore, by respectively testing \eqref{eq:phi2} by $\phi$ and $-\Delta \phi$, \eqref{eq:sigma2} by $\sigma$ and $-\Delta \sigma$, \eqref{eq:p2} by $p$ and $-\Delta p$ and summing them up, we obtain the identities:
	\begin{align}
		& \mezzo \ddt \norm{\vpsi}^2_{\HH} + \norm{\vD \nabla \vpsi}^2_{\HH} = (f_{\vpsi}, \vpsi)_{\HH}, \label{backuniq:test1} \\ 
		& \mezzo \ddt \norm{\nabla \vpsi}^2_{\HH} + \norm{\vD \Delta \vpsi}^2_{\HH} = (f_{\vpsi}, - \Delta \vpsi)_{\HH}. \label{backuniq:test2}
	\end{align}
    %\elena{mi pare che le tre righe a seguire vadano messe dopo la definizione di $\vpsi$ etc ....}	    
    Now, we are exactly in the situation to apply \cite[Theorem 3.1.3]{isakov}, with $\alpha = C$, to obtain the desired estimate. In particular, observe that, by going through the proof of the cited Theorem and especially the one of \cite[Lemma 3.1.4]{isakov}, the argument can be repeated even in our regularity setting with $\vpsi \in \XX_0 \times \XX \times \XX$. Indeed, the crucial point is the validity of the identities \eqref{backuniq:test1} and \eqref{backuniq:test2}. For the sake of completeness, we recall here below the fundamental steps of the proof. 

    Let 
    \[ q := \norm{\vpsi}^2_{\HH}, \quad g := \frac{2(f_{\vpsi}, \vpsi)_{\HH}}{q}, \quad l := \log q - \int_0^t g \, \de s. \]
    Then, the first step is to show, through explicit calculations, that there exists a constant $c \ge 0$ such that $l$ satisfies the differential inequality 
    \begin{equation}
        \label{l:diffineq}
        \partial^2_t l + c \abs{\partial_t l} + c \ge 0 \quad \text{in $(0,T)$}.
    \end{equation}
    This can be done exactly as in \cite[Lemma 3.1.4]{isakov}, by relying on the identities \eqref{backuniq:test1} and \eqref{backuniq:test2}. Note that these calculations are actually easier in our case, since our operator $A = - \vD \Delta$ has no skew-symmetric part. Then, exactly as in \cite[Lemma 3.1.5]{isakov}, by exploiting maximum, minimum and comparison principles, we can see that $l(t) \le L(t)$ for any $t \in (0,T)$, where $L$ is the solution to the following boundary value problem:
    \[ \begin{cases}
        \partial_t^2 L + c_\tau \partial_t L + c = 0, \\
        L(0) = l(0), \quad L(T) = l(T),
    \end{cases} \quad \text{where } c_\tau = \begin{cases}
        c \quad \text{on } (0,\tau), \\
        - c \quad \text{on } (\tau, T),
    \end{cases} \]
    for some $\tau \in (0,T)$, depending on the sign of $\partial_t L$. Let us first consider the problem
    \[ \begin{cases}
        \partial_t^2 v + c_\tau \partial_t v = 0, \\
        v(0) = l(0), \quad v(T) = l(T).
    \end{cases} \]
    Observe that, since $\partial_t v$ satisfies a homogeneous linear differential equation of first order, it does not change its sign, which depends only on the values $v(0)$ and $v(T)$. Then, by the maximum principle we can say that $v \le V$ on $[0,T]$, with $V$ solution to
    \[ \begin{cases}
        \partial_t^2 V + c \partial_t V = 0, \\
        V(0) = l(0), \quad V(T) = l(T).
    \end{cases} \]
    Computing directly the solution $V$ we find that
    \[ v(t) \le V(t) = l(0) \frac{e^{-\gamma t} - e^{-\gamma T}}{1 - e^{-\gamma T}} + l(T) \frac{1 - e^{-\gamma t}}{1 - e^{-\gamma T}}, \]
    for any $t \in [0,T]$, with $\gamma$ being either $c$ or $-c$, depending on whether $l(0) \le l(T)$ or $l(0) > l(T)$. Then, one can also find an explicit solution $w$ of the non-homogeneous equation with zero boundary data and show that 
    \[  w(t) \le \frac{2 e^{cT}}{c} \quad \text{for any } t \in (0,T). \]
    We refer to \cite[Lemma 3.1.5]{isakov} for all these explicit calculations. Hence, we find that 
    \[ l(t) \le L(t) = v(t) + w(t) \le l(0) \frac{e^{-\gamma t} - e^{-\gamma T}}{1 - e^{-\gamma T}} + l(T) \frac{1 - e^{-\gamma t}}{1 - e^{-\gamma T}} + \frac{2 e^{cT}}{c},  \]
    for any $t \in [0,T]$, with $\gamma = c$ or $\gamma = -c$. Therefore, by using the definition of $l$ and the fact that $\int_0^t g \, \de s \le 2CT$, we deduce that 
    \[ \log \norm{\vpsi(t)} \le \log \norm{\vpsi(0)} \frac{e^{-\gamma t} - e^{-\gamma T}}{1 - e^{-\gamma T}} + \log \norm{\vpsi(T)} \frac{1 - e^{-\gamma t}}{1 - e^{-\gamma T}} + \frac{2 e^{cT}}{c} + 2CT. \]
    Finally, by exponentiating and using $\norm{\vpsi(0)}_{\HH} \le M$, we reach the thesis with a constant $C_1 > 0$ depending only on $c$, $C$ and $T$.
	% \matteo{Servono più o meno dettagli sui punti fondamentali di questa dimostrazione? SI', METTERE QUALCHE DETTAGLIO IN PIU', TRALASCIANDO SOLO I CONTI ESPLICITI.}
\end{proof}

\begin{remark}
	In the proof of Proposition \ref{prop:condstab}, for simplicity of exposition, we used the abstract formulation $\partial_t \vpsi - \vD \Delta \vpsi = f_{\vpsi}$, $\vpsi(0) = \vpsi_0$ of the system \eqref{eq:phi2}--\eqref{ic2}. We just want to comment on the fact that the Laplacian operator is not exactly the same for all three components, since for $\phi$ it is complemented with homogeneous Dirichlet boundary conditions, while for $\sigma$ and $p$ with homogeneous Neumann. However, with a slight abuse of notation, we will stick to the already-used expression.
\end{remark}

\begin{remark}
	Observe that, from \cite[Lemma 3.1.5]{isakov}, the choice of $\lambda_1$ or $\lambda_2$ depends on whether $\log \norm{\vpsi(0)}^2_{\HH}$ is smaller or bigger than $\log \norm{\vpsi(T)}^2_{\HH} - \int_0^T (f_{\vpsi},\vpsi)/\norm{\vpsi}_{\HH}^2 \, \de t$, which in general cannot be verified a priori. Therefore, we need to take into account both versions of $\lambda(t)$. However, in both cases $\lambda_i(0) = 0$, $i=1,2$, so \eqref{eq:condstab} does not give a H\"older stability estimate also for $t=0$. Nevertheless, we still get a backward uniqueness result as in the following Corollary.
\end{remark}

\begin{corollary}
	\label{prop:backuniq}
	Assume hypotheses \ref{ass:coeff}--\ref{ass:m}. Let $(\phi_1, \sigma_1, p_1)$ and $(\phi_2, \sigma_2, p_2)$ be two solutions of \eqref{eq:phi}--\eqref{ic} corresponding to two triples of initial data $(\phi_0^i, \sigma_0^i, p_0^i) \in \Iad$ for $i=1,2$.
	
	If $(\phi_1, \sigma_1, p_1)(T) = (\phi_2, \sigma_2, p_2)(T)$, then $(\phi_1, \sigma_1, p_1)(t) = (\phi_2, \sigma_2, p_2)(t)$ for any $t \in [0,T]$. In particular, $(\phi_0^1, \sigma_0^1, p_0^1) = (\phi_0^2, \sigma_0^2, p_0^2)$ in $\VV$.
\end{corollary}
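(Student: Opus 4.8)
The statement follows almost immediately from the conditional stability estimate of Proposition~\ref{prop:condstab}; the only genuine point is the passage to the limit at the endpoint $t=0$. The plan is as follows. First I would apply \eqref{eq:condstab} to the two given solutions. Since by hypothesis the terminal data coincide, that is $(\phi_1, \sigma_1, p_1)(T) = (\phi_2, \sigma_2, p_2)(T)$, the factor $\norm{(\phi_1(T),\sigma_1(T),p_1(T)) - (\phi_2(T),\sigma_2(T),p_2(T))}_{\HH}^{\lambda(t)}$ appearing on the right-hand side of \eqref{eq:condstab} vanishes at every time $t$ for which the exponent $\lambda(t)$ is strictly positive.

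Next I would examine the explicit expressions \eqref{lambda:choice}: both admissible exponents satisfy $\lambda_1(t) > 0$ and $\lambda_2(t) > 0$ for all $t \in (0,T]$, whereas $\lambda_i(0) = 0$. Hence, no matter which of the two is dictated by the argument in Proposition~\ref{prop:condstab}, the right-hand side of \eqref{eq:condstab} equals zero for every $t \in (0,T]$, so that
\[ (\phi_1, \sigma_1, p_1)(t) = (\phi_2, \sigma_2, p_2)(t) \quad \text{in } \HH, \qquad \forall\, t \in (0,T]. \]
As the two solutions are elements of $L^2(\Omega)^3$ at each such time and both live in $\VV \hookrightarrow \HH$, equality in $\HH$ forces them to coincide as functions, whence they agree as $\VV$-valued maps on $(0,T]$.

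Finally, to reach $t=0$ I would exploit the time-regularity granted by Theorem~\ref{thm:wellposedness}: since the initial data lie in $\Iad \subset \VV$, the solutions are strong solutions, hence continuous in time with values in $\VV$, i.e.\ elements of $\C0{\VV}$. Letting $t \to 0^+$ in the identity above then yields $(\phi_0^1, \sigma_0^1, p_0^1) = (\phi_0^2, \sigma_0^2, p_0^2)$ in $\VV$, and coincidence of the solutions on the whole interval $[0,T]$ follows. I do not expect any real difficulty in the routine parts; the single step requiring care—and the expected obstacle—is precisely this limit at $t=0$, where the Hölder exponent $\lambda_i(0)=0$ degenerates the estimate \eqref{eq:condstab}, forcing one to rely on continuity in time rather than on the stability inequality itself.
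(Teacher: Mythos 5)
Your proposal is correct and follows essentially the same route as the paper: apply the conditional stability estimate \eqref{eq:condstab} to conclude that the difference vanishes for every $t\in(0,T]$ (where $\lambda_i(t)>0$), and then recover the endpoint $t=0$ by the time-continuity of the solutions. The only cosmetic difference is that the paper invokes continuity in $\C0{\HH}$ rather than $\C0{\VV}$, which already suffices since equality of the initial data in $H$ for functions belonging to $\VV$ gives equality in $\VV$.
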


\begin{proof}
    Observe that, if $(\phi_1, \sigma_1, p_1)(T) = (\phi_2, \sigma_2, p_2)(T)$, then estimate \eqref{eq:condstab} gives that 
    \[ 
        \norm{(\phi_1(t), \sigma_1(t), p_1(t)) - (\phi_2(t), \sigma_2(t), p_2(t))}_{\HH} = 0
    \]
    for any $t \in (0,T]$.
    The result now easily follows also for $t=0$, by exploiting the continuity of the function $t \to \norm{(\phi_1(t), \sigma_1(t), p_1(t)) - (\phi_2(t), \sigma_2(t), p_2(t))}_{\HH}$, which is guaranteed by the $\C 0 {\HH}$-regularity of the solutions. 
\end{proof}

For future use, we observe that if we define (cf.~\eqref{bound:c0h} for the definition of $M$)
\begin{equation}
	\label{eq:epsilon}
	\eps := \frac{\norm{(\phi_1(T), \sigma_1(T), p_1(T)) - (\phi_2(T), \sigma_2(T), p_2(T))}_{\HH}}{M} = \frac{\norm{\vpsi(T)}_{\HH}}{M},
\end{equation}
then the stability estimate \eqref{eq:condstab} becomes 
\[ \norm{(\phi_1(t), \sigma_1(t), p_1(t)) - (\phi_2(t), \sigma_2(t), p_2(t))}_{\HH} \le C_1 M \eps^{\lambda(t)}, \]
for any $t \in (0,T)$, where $\lambda(t)$ is either $\lambda_1(t)$ or $\lambda_2(t)$.
To obtain a stability estimate even for the reconstruction of the initial data, we have to impose a further a priori bound. 
Indeed, if, additionally to $\triplein \in \Iad$, we also assume that
\[ \norm{(\phi_0, \sigma_0, p_0)}_{\VV} \le \Cb, \]
for some $\Cb > 0$, then by Theorem \ref{thm:wellposedness} we have the following uniform bound on the solution of \eqref{eq:phi}--\eqref{ic}:
\begin{equation}
    \label{bound:h1h}
    \norm{(\phi, \sigma, p)}_{\HT1{\HH}} \le M_1,
\end{equation}
where $M_1$ depends only on the fixed parameters of the system and on $\Cb$, and can be chosen as the minimal one. 
This further bound enables us to prove a logarithmic-type stability result for the reconstructed initial data.

\begin{theorem}
	\label{thm:inizstab}
	Assume hypotheses \ref{ass:coeff}--\ref{ass:m}. Let $(\phi_1, \sigma_1, p_1)$ and $(\phi_2, \sigma_2, p_2)$ be two solutions of \eqref{eq:phi}--\eqref{ic} corresponding to two triples of initial data $(\phi_0^i, \sigma_0^i, p_0^i) \in \Iad$ for $i=1,2$.
	
	Assume that for $i=1,2$ we also have the following a priori bound:
	\begin{equation}
		\label{eq:ass_aprioribound}
		\norm{(\phi_0^i, \sigma_0^i, p_0^i)}_{\VV} \le \Cb,
	\end{equation}
    and let $M \gs 0$, $C_1>0$, $M_1 \gs 0$ be as in \eqref{bound:c0h}, \eqref{eq:condstab}, \eqref{bound:h1h}.
	Let $\eps$ be defined as in \eqref{eq:epsilon}
	%\[ \eps := \frac{\norm{(\phi_1(T), \sigma_1(T), p_1(T)) - (\phi_2(T), \sigma_2(T), p_2(T))}_{\HH}}{M} \]
	and assume that 
	\[ \eps \le \exp \left\{ - \left( \min \left\{ 1, \frac{4\sqrt{3} M C_1^{3/2}}{9M_1} \right\} \right)^{-1} \right\}. \]
	Then, there exists a constant $C_2 \gs 0$ such that 
	\begin{equation}
		\label{eq:inizstab}
		\norm{ (\phi_0^1, \sigma_0^1, p_0^1) - (\phi_0^2, \sigma_0^2, p_0^2) }^2_{\HH} \le \frac{C_2}{\sqrt{\abs{\log \eps}}}, 
	\end{equation}
	where we can quantify the constant as 
	\[ C_2 = \frac{2M_1 M C_1^{1/2}}{\beta^{1/2}} + \frac{3M_1^2}{4\beta C_1}, \quad \text{with } \beta = \frac{\gamma}{e^{\gamma T}-1} \gs 0, \]
 and where $\gamma \gs 0$ is the constant appearing in \eqref{lambda:choice}.
 %\elena{what is $\gamma$? Why is $\beta$ positive? Note that, as expected,  the stability constant $C_2$ deteriorates exponentially in time! Maybe we could highlight this point }
\end{theorem}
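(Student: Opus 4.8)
The plan is to estimate the initial datum by interpolating, in time, the conditional H\"older stability of Proposition~\ref{prop:condstab} against the uniform $H^1$-in-time bound \eqref{bound:h1h}. Setting $\vpsi := (\phi_1 - \phi_2, \sigma_1 - \sigma_2, p_1 - p_2)$ and recalling $\eps$ from \eqref{eq:epsilon}, estimate \eqref{eq:condstab} reads $\norm{\vpsi(t)}_{\HH} \le C_1 M \eps^{\lambda(t)}$, where $\lambda(t)$ is one of the two branches in \eqref{lambda:choice}. The first step is to remove the dependence on the branch: using the elementary inequalities $e^{\gamma t} - 1 \ge \gamma t$ and $1 - e^{-\gamma t} \ge \gamma t\, e^{-\gamma T}$ on $[0,T]$, I would show that \emph{both} $\lambda_1$ and $\lambda_2$ satisfy $\lambda(t) \ge \beta t$ with $\beta = \gamma/(e^{\gamma T} - 1)$, which is precisely the constant appearing in the statement. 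Since $\eps < 1$, this yields the clean, branch-free bound
\[ \norm{\vpsi(t)}_{\HH} \le C_1 M \eps^{\beta t}, \qquad \forall\, t \in (0,T]. \]
The degeneration $\lambda(t) \to 0$ as $t \to 0$ is exactly what prevents a H\"older estimate at $t=0$ and forces the weaker, logarithmic rate.

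The second ingredient is a time-interpolation inequality. Starting from $\vpsi(0) = \vpsi(s) - \int_0^s \partial_r \vpsi \, \de r$, squaring, and averaging over $s \in (0,t)$, I obtain
\[ \norm{\vpsi(0)}_{\HH}^2 \le \frac{2}{t}\, \norm{\vpsi}_{L^2(0,t;\HH)}^2 + t \, \norm{\partial_t \vpsi}_{L^2(0,t;\HH)}^2, \qquad \forall\, t \in (0,T]. \]
For the first term, I would integrate the stability bound just derived: since $\int_0^t \eps^{2\beta s}\, \de s \le \left(2\beta \abs{\log\eps}\right)^{-1}$, it follows that $\norm{\vpsi}_{L^2(0,t;\HH)}^2 \le C\,M^2 / (\beta \abs{\log\eps})$ (up to a fixed power of $C_1$), which crucially decays like $1/\abs{\log\eps}$ and is uniform in $t$. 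For the second term, the a priori bound \eqref{bound:h1h} applied to each of the two solutions controls $\norm{\partial_t \vpsi}_{L^2(0,t;\HH)}^2 \le C M_1^2$.

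It then remains to optimise the resulting inequality, which has the form $\norm{\vpsi(0)}_{\HH}^2 \le P/t + Q\, t$, with $P \sim M^2/(\beta\abs{\log\eps})$ and $Q \sim M_1^2$, over $t \in (0,T]$. The minimiser is $t^\star = \sqrt{P/Q} \sim \abs{\log\eps}^{-1/2}$, giving the value $2\sqrt{PQ} \sim \abs{\log\eps}^{-1/2}$, which is exactly the claimed rate $C_2/\sqrt{\abs{\log\eps}}$; carefully tracking the constants produced by Proposition~\ref{prop:condstab} and \eqref{bound:h1h} through this optimisation yields the explicit $C_2$ in the statement. The role of the smallness assumption on $\eps$ is to guarantee that $t^\star \in (0,T]$, so that the optimal time is admissible and all the preceding estimates apply on $(0,t^\star)$. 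I expect the main obstacle to be precisely this optimisation step together with the branch analysis of $\lambda$: one must balance a term that is exponentially small in $t$ (the stability contribution) against one that grows only linearly in $t$ (the time-regularity contribution), and then verify, via the hypothesis on $\eps$, that the balancing time remains within $[0,T]$ --- this interplay is what degrades the H\"older stability of Proposition~\ref{prop:condstab} at positive times into the merely logarithmic stability \eqref{eq:inizstab} at $t=0$.
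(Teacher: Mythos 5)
Your proposal is correct in substance and follows the same overall strategy as the paper's proof: combine the H\"older stability of Proposition \ref{prop:condstab} with the time-regularity bound \eqref{bound:h1h}, pass to the branch-free lower bound $\lambda_i(t)\ge\beta t$ (your elementary inequalities do give exactly the paper's tangent line $\tilde\lambda(t)=\beta t$), and then optimise over the intermediate time. The difference lies in how the last two steps are executed. The paper uses the single-time identity $\norm{\vpsi(0)}^2_{\HH}\le\norm{\vpsi(t)}^2_{\HH}+2\norm{\partial_t\vpsi}_{L^2(0,t;\HH)}\norm{\vpsi}_{L^2(0,t;\HH)}$, bounds $\norm{\vpsi(s)}^2_{\HH}\le M\cdot C_1M\eps^{\beta s}$ (spending only one power of the stability estimate), and minimises the resulting function $g(t)=C_1^2M^2\eps^{2\beta t}+\tfrac{2M_1MC_1^{1/2}}{\beta^{1/2}\radice}(1-\eps^{\beta t})^{1/2}$ via the analysis of $x\sqrt{1-x}$; this is precisely where the stated threshold $\tfrac{4\sqrt{3}MC_1^{3/2}}{9M_1}$ and the stated $C_2$ come from. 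Your averaged triangle inequality $\norm{\vpsi(0)}^2_{\HH}\le\tfrac{2}{t}\norm{\vpsi}^2_{L^2(0,t;\HH)}+t\norm{\partial_t\vpsi}^2_{L^2(0,t;\HH)}$ followed by AM--GM on $P/t+Qt$ is cleaner and yields the same rate $\abs{\log\eps}^{-1/2}$, but it buys you a \emph{different} explicit constant (of the form $c\,C_1MM_1\beta^{-1/2}$, not the $C_2$ in the statement) and, more importantly, a different admissibility condition: you must verify $t^\star=\sqrt{P/Q}\le T$, which amounts to a lower bound on $\abs{\log\eps}$ of the order $C_1^2M^2/(M_1^2\beta T^2)$ rather than the smallness hypothesis actually assumed; if $t^\star>T$, the best you get at $t=T$ retains the non-decaying term $QT$ and the conclusion fails. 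So your argument proves the theorem with its own (equally legitimate) constant and $\eps$-threshold, whereas reproducing the statement exactly as written requires the paper's optimisation of $g(t)$.
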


\begin{proof}
	For all this argument, we still use the compact notation with $\vpsi$ introduced in the proof of Proposition \ref{prop:condstab}. Note that under our regularity assumptions, by standard results in Lebesgue-Bochner theory, the function $t \to \norm{\vpsi(t)}^2_{\HH}$ is absolutely continuous. Then, we can apply the fundamental theorem of calculus to say that for any $t \in [0,T]$
	\begin{align*}
		\norm{\vpsi(t)}^2_{\HH} & = \norm{\vpsi(0)}^2_{\HH} + \int_0^t \frac{\de}{\de s} \norm{\vpsi(s)}^2_{\HH} \, \de s \\
		& = \norm{\vpsi(0)}^2_{\HH} + \int_0^t 2 ( \partial_s \vpsi(s), \vpsi(s) )_{\HH} \, \de s.
	\end{align*}
	Then, by using Cauchy-Schwarz, the stability inequality \eqref{eq:condstab} and the bounds \eqref{bound:c0h} and \eqref{bound:h1h}, we deduce that 
	\begin{align*}
		\norm{\vpsi(0)}^2_{\HH} & \le \norm{\vpsi(t)}^2_{\HH} + 2 \left( \int_0^t \norm{\partial_s \vpsi(s)}^2_{\HH} \, \de s \right)^{1/2} \left( \int_0^t \norm{\vpsi(s)}^2_{\HH} \, \de s \right)^{1/2} \\
		& \le C_1^2 M^2 \eps^{2\lambda(t)} + 2 M_1 \left( \int_0^t M \cdot C_1 M \eps^{\lambda(s)} \, \de s  \right)^{1/2},
	\end{align*}
	where $\lambda$ can be either $\lambda_1$ or $\lambda_2$. 
	Now, by working on the explicit expressions of $\lambda_i(t)$, $i=1,2$, one can easily see that $\lambda_i(0) = 0$, $\lambda_i(T) = 1$, $\lambda_i$ is strictly increasing for both $i=1,2$, $\lambda_1$ is concave and $\lambda_2$ is convex. 
	So one has the situation depicted in Figure \ref{fig:lambdas}. 
	Then, the tangent line at $0$ to $\lambda_2$, which has the expression
	\[ \tilde{\lambda}(t) = \beta t, \quad \text{with } \beta = \frac{\gamma}{e^{\gamma T}-1}, \]
	is below both $\lambda_1$ and $\lambda_2$. Therefore, since $0 \ls \eps \ls 1$, we can infer that 
	\[ \eps^{\lambda_i(t)} \le \eps^{\beta t} \quad \text{for both } i=1,2. \]
	Hence, we can further estimate
	\begin{align*}
		\norm{\vpsi(0)}^2_{\HH} & \le C_1^2 M^2 \eps^{2\beta t} + 2 M_1 M C^{1/2} \left( \int_0^t \eps^{\beta s} \, \de s  \right)^{1/2} \\
		& = C_1^2 M^2 \eps^{2\beta t} + 2 M_1 M C^{1/2} \left( \frac{\eps^{\beta t}}{\beta \log \eps} - \frac{1}{\beta \log \eps} \right)^{1/2} \\
		& = C_1^2 M^2 \eps^{2\beta t} + \frac{2 M_1 M C^{1/2}}{\beta^{1/2} \sqrt{\abs{\log \eps}}} (1 - \eps^{\beta t})^{1/2} := g(t).
	\end{align*}
	Now, note that this estimate holds for any $t \in [0,T]$, so we can make it sharper and independent of $t$ by minimising the function $g(t)$. Then, we compute its derivative and, recalling that $\log \eps \ls 0$, we obtain that 
	\[ g'(t) = - C_1^2 M^2 2 \beta \abs{\log \eps} \cdot \eps^{2\beta t} + M_1 M C_1^{1/2} \beta^{1/2} \radice \frac{\eps^{\beta t}}{(1-\eps^{\beta t})^{1/2}}. \]
	If we call $x = \eps^{\beta t}$, $0 \ls x \ls 1$, we can find the critical points by solving the equation
	\[ - C_1^2 M^2 2 \beta \abs{\log \eps} \cdot x^2 + M_1 M C_1^{1/2} \beta^{1/2} \radice \frac{x}{(1-x)^{1/2}} = 0, \]
	which, after some manipulations, is equivalent to
	\begin{equation}
		\label{eq:xsolve}
		x \sqrt{1-x} = \frac{M_1}{2 \beta^{1/2} C_1^{3/2} M \radice}.
	\end{equation}
	By analysing the function $f(x)=x \sqrt{1-x}$, we see that its graph is like the one in Figure \ref{fig:root_function}, with maximum value at $\frac{2\sqrt{3}}{9}$. Then, equation \eqref{eq:xsolve} has at least a solution if 
	\[ \frac{M_1}{2 \beta^{1/2} C_1^{3/2} M \radice} \le  \frac{2\sqrt{3}}{9}, \]
	which gives part of the smallness condition in the statement of the Theorem. 
	Moreover, one can easily see that the solution corresponding to the minimum value of $g(t)$ is the smallest one, let us call it $\bar{x}$. 
	Then, we can see that the increasing branch of the function $f$ lies between the two lines $y=x$ and $y=\frac{\sqrt{3}}{3}x$, therefore we can estimate the value of $\bar{x}$ with the solutions corresponding to the two lines, namely
	\begin{equation}
		\label{eq:xestimate}
		\frac{M_1}{2 \beta^{1/2} C_1^{3/2} M \radice} \le \bar{x} \le \frac{\sqrt{3}M_1}{2 \beta^{1/2} C_1^{3/2} M \radice}.
	\end{equation}
	Finally, we can use this piece of information to close our estimate:
	\begin{align*}
		\norm{\vpsi(0)}^2_{\HH} & \le \min_{t \in [0,T]} g(t) = C_1^2 M^2 \bar{x}^2 + \frac{2 M_1 M C^{1/2}}{\beta^{1/2} \sqrt{\abs{\log \eps}}} (1 - \bar{x})^{1/2} \\
		& \le C_1^2M^2 \frac{3M_1^2}{4\beta C_1^3 M^2 \abs{\log \eps}} + \frac{2 M_1 M C^{1/2}}{\beta^{1/2} \radice} \underbrace{\left( 1 - \frac{M_1}{2 \beta^{1/2} C_1^{3/2} M \radice} \right)^{1/2}}_{\le 1} \\
		& \le \frac{1}{\radice} \left( \frac{2 M_1 M C^{1/2}}{\beta^{1/2}} + \frac{3M_1^2}{4 \beta C_1 \radice} \right) \\
		& \le \frac{1}{\radice} \left( \frac{2 M_1 M C^{1/2}}{\beta^{1/2}} + \frac{3M_1^2}{4 \beta C_1} \right), 
	\end{align*}
	if we also assume that $\frac{1}{\radice} \le 1$, which gives the other part of the smallness assumption. 
	This concludes the proof of Theorem \ref{thm:inizstab}.
\end{proof}

\begin{figure}[t]
    \centering
    \begin{minipage}{0.5\textwidth}
        \centering
        \includegraphics[scale=0.8]{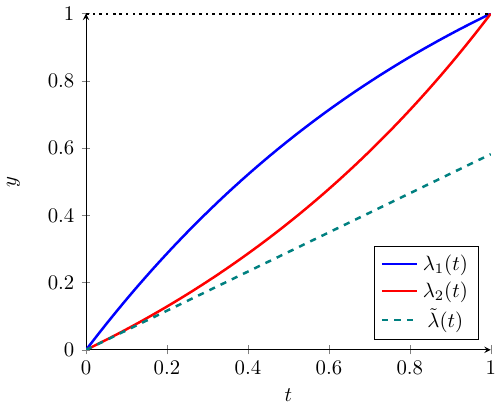}
        \captionof{figure}{Graphs of $\lambda_1(t)$, $\lambda_2(t)$ and $\tilde{\lambda}(t)$,\\ where we set  $T=1$ for convenience.}
        \label{fig:lambdas}
    \end{minipage}%
    \begin{minipage}{0.5\textwidth}
        \centering
        \includegraphics[scale=0.8]{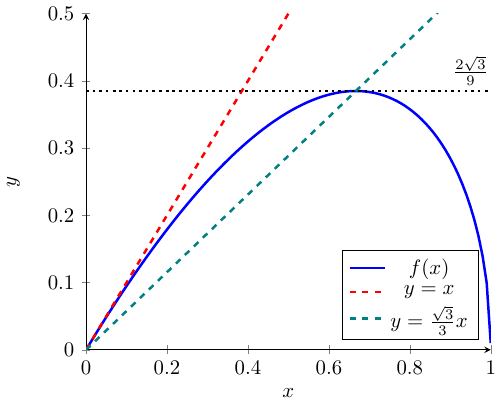}
        \captionof{figure}{Graph of the function $f(x) = x \sqrt{1-x}$, \\ along with the two linear bounds.}
        \label{fig:root_function}
    \end{minipage}
\end{figure}

\begin{remark}
	Observe that this type of logarithmic estimate can be impractical in applications, as the error actually gets small only if the final data are very close to each other.  
    Moreover, it can be easily seen that the stability constant $C_2$ deteriorates exponentially with the final time $T$, making this stability estimate even more unreliable. 
    Nevertheless, this is expected since we are dealing with a backward problem for a parabolic system, which is known to be severely ill-posed as $T$ gets larger.
\end{remark}

We now want to improve this stability estimate to one of Lipschitz type, under the additional a priori assumption that the initial data to be reconstructed actually lives in a finite-dimensional subspace of $\VV$, for instance, one of the discrete spaces used in numerical approximations. 
To do this, we recall the following general result, which was proved in \cite[Proposition 5]{BV2006}. 
We note that the same result was successfully used to get Lipschitz stability estimates for other inverse ill-posed problems, like in \cite{AV2005, ABFV2022, BFV2008}.
\begin{lemma}
	\label{lem:vessella}
	Let $R$ and $r_0$ be positive numbers. Let $\Lambda$ be an open subset of $\R^n$, for some $n \in \N$, and let $K$ be a subset of $\Lambda$. Assume that 
	\begin{equation}
		\label{eq:ass_finitedim}
		K \subseteq B_R(0) \quad \text{and} \quad B_{r_0}(p) \subseteq \Lambda, \text{ for any } p\in K.
	\end{equation}
	Let $\mathcal{B}$ be a Banach space and $G: \Lambda \to \mathcal{B}$ an operator. Assume that $G$ satisfies the following conditions:
	\begin{itemize}
		\item[$(i)$] $G_{\mid K}$ is injective,
		\item[$(ii)$] $(G_{\mid K})^{-1}: G(K) \to K$ is uniformly continuous with modulus of continuity $\omega_0(\cdot)$,
		\item[$(iii)$] $G$ is Fr\'echet-differentiable with derivative $\D G$,
		\item[$(iv)$] $\D G: \Lambda \to \mathcal{L}(\Lambda, \mathcal{B})$ is uniformly continuous with modulus of continuity $\omega_1(\cdot)$.
		\item[$(v)$] there exists $m_0 \gs 0$ such that 
							\[ \inf_{x \in K, \abs{y}=1} \norm{\D G(x)[y]}_{\mathcal{B}} \ge m_0. \]
	\end{itemize}
	
	Then, for any $x_1, x_2 \in K$
	\begin{equation}
		\abs{x_1 - x_2} \le C \norm{G(x_1) - G(x_2)}_{\mathcal{B}}, 
	\end{equation}
	where $C = \max\left\{ \frac{2R}{(\omega_0^{-1})_*(\delta_1)}, \frac{2}{m_0} \right\}$, $\delta_1 = \mezzo \min \{ \delta_0, r_0 \}$ and $\delta_0 = (\omega_1^{-1})_* \left( \frac{m_0}{2} \right)$. Here, for a modulus of continuity $\omega$, we denote
	\[ (\omega^{-1})_*(y) := \inf \{ x  \in [0, +\infty) \mid \omega(x) \ge y \}. \]
\end{lemma}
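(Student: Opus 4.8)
The plan is to prove the Lipschitz estimate by splitting the pair $(x_1, x_2) \in K \times K$ into two regimes according to their distance, and treating the ``local'' and ``global'' regimes with two completely different arguments, reconciled by the choice of the threshold $\delta_1 = \mezzo \min\{\delta_0, r_0\}$. In the local regime the estimate comes from the lower bound $(v)$ on the derivative, while in the global regime it comes from injectivity $(i)$ together with the uniform continuity $(ii)$ of the inverse and the boundedness $K \subseteq B_R(0)$.

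First I would treat the local regime $\abs{x_1 - x_2} \le \delta_1$. Since $\delta_1 \le r_0/2 < r_0$ and $B_{r_0}(x_2) \subseteq \Lambda$ by \eqref{eq:ass_finitedim}, the whole segment $[x_2, x_1]$ lies in $\Lambda$, so Fréchet-differentiability $(iii)$ lets me write, by the fundamental theorem of calculus in $\mathcal{B}$,
\[
  G(x_1) - G(x_2) = \int_0^1 \D G\big(x_2 + s(x_1 - x_2)\big)[x_1 - x_2] \, \de s.
\]
Adding and subtracting $\D G(x_2)[x_1 - x_2]$ and using the reverse triangle inequality, the lower bound $(v)$ controls the leading term by $\norm{\D G(x_2)[x_1-x_2]}_{\mathcal{B}} \ge m_0 \abs{x_1-x_2}$, whereas the remainder is bounded, via uniform continuity $(iv)$ of $\D G$, by $\omega_1(\abs{x_1-x_2}) \abs{x_1-x_2}$. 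Since $\delta_1 \le \delta_0/2 < \delta_0 = (\omega_1^{-1})_*(m_0/2)$ forces $\omega_1(\abs{x_1-x_2}) \le m_0/2$ on this regime, the remainder is absorbed into half of the leading term, yielding
\[
  \norm{G(x_1) - G(x_2)}_{\mathcal{B}} \ge \frac{m_0}{2} \abs{x_1 - x_2}, \quad \text{i.e.} \quad \abs{x_1 - x_2} \le \frac{2}{m_0} \norm{G(x_1) - G(x_2)}_{\mathcal{B}}.
\]

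Next I would treat the global regime $\abs{x_1 - x_2} > \delta_1$. Here injectivity $(i)$ makes $(G_{\mid K})^{-1}$ well-defined, and $(ii)$ gives $\abs{x_1 - x_2} \le \omega_0(\norm{G(x_1) - G(x_2)}_{\mathcal{B}})$. Since $\omega_0$ is a modulus of continuity (nondecreasing, vanishing at $0$), the chain $\omega_0(\norm{G(x_1) - G(x_2)}_{\mathcal{B}}) \ge \abs{x_1 - x_2} > \delta_1$ together with the definition of the generalised inverse forces $\norm{G(x_1) - G(x_2)}_{\mathcal{B}} \ge (\omega_0^{-1})_*(\delta_1)$. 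Combining this with the diameter bound $\abs{x_1 - x_2} < 2R$ coming from $K \subseteq B_R(0)$, I obtain
\[
  \abs{x_1 - x_2} < 2R = \frac{2R}{(\omega_0^{-1})_*(\delta_1)} \, (\omega_0^{-1})_*(\delta_1) \le \frac{2R}{(\omega_0^{-1})_*(\delta_1)} \norm{G(x_1) - G(x_2)}_{\mathcal{B}}.
\]
Taking the larger of the two constants across both regimes gives exactly $C = \max\{ 2R/(\omega_0^{-1})_*(\delta_1), \, 2/m_0 \}$, and the two regimes cover all pairs by construction.

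The main obstacle is twofold, and is where I would spend most care. First is the bookkeeping of the generalised inverse $(\omega^{-1})_*$: one must check that $\delta_0 = (\omega_1^{-1})_*(m_0/2) > 0$ and hence $\delta_1 > 0$ and $(\omega_0^{-1})_*(\delta_1) > 0$ (so that $C$ is finite), and verify the implication $\omega_0(s) > \delta_1 \Rightarrow s \ge (\omega_0^{-1})_*(\delta_1)$ using only monotonicity of $\omega_0$, without any continuity assumption. Second, the single threshold $\delta_1 = \mezzo \min\{\delta_0, r_0\}$ must be simultaneously compatible with the two constraints of the local step---the segment staying inside $\Lambda$ (governed by $r_0$) and the smallness $\omega_1(\abs{x_1-x_2}) \le m_0/2$ (governed by $\delta_0$)---so that no pair escapes the union of the two regimes. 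Once these are pinned down, the local and global estimates glue together with no gap.
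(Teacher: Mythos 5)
Your proof is correct: the dichotomy at $\delta_1$, the absorption of the linearisation remainder via $\omega_1(\abs{x_1-x_2})\le m_0/2$ for $\abs{x_1-x_2}<\delta_0$, and the lower bound $\norm{G(x_1)-G(x_2)}_{\mathcal{B}}\ge(\omega_0^{-1})_*(\delta_1)$ in the far regime (combined with $\diam K\le 2R$) yield exactly the stated constant, and the checks you flag ($\delta_0>0$, $(\omega_0^{-1})_*(\delta_1)>0$, the segment staying in $\Lambda$) are precisely the ones needed. Note that the paper itself gives no proof of this lemma, quoting it directly from \cite[Proposition 5]{BV2006}; your argument is essentially the standard one given there.
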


% \begin{remark}
% 	In practical applications, one can choose as $\Lambda$ a finite-dimensional subspace of a Banach space $X$ (which in the language of the theorem would mean $\Lambda = \R^n$) and as $K$ a subset of $\Lambda$ which is bounded in the norm of $X$. In this way, hypothesis \eqref{eq:ass_finitedim} would be immediately satisfied. Indeed, in our case, we will take $X = \VV$, $\Lambda$ as a finite-dimensional subspace of $\VV$, for instance a finite element space, and $K = \{ \vec{u} \in \Lambda \mid \norm{\vec{u}}_{\VV} \le \Cb \}$. 
% \end{remark}

Before using Lemma \ref{lem:vessella} in our case, we still need to study more in detail the Fr\'echet-derivative of our operator $\mathcal{R}: \VV \to \HH$, which associates to any initial data $(\phi_0, \sigma_0, p_0)$ the solution $(\phi(T), \sigma(T), p(T))$ at the final time. 
In Theorem \ref{thm:frechet} we showed that $\D\mathcal{R}: \VV \cap \Lx\infty^3 \to \mathcal{L}(\VV \cap \Lx\infty^3, \HH)$ is such that, for any $(h,k,w) \in \VV \cap \Lx\infty^3$, $\D\mathcal{R}(\phi_0, \sigma_0, p_0)[h,k,w] = (Y(T), Z(T), P(T))$, which is the solution of the linearised system \eqref{eq:philin}--\eqref{iclin} evaluated at the final time. 
Regarding this linearised inverse problem, we can prove a similar logarithmic stability estimate, since it essentially has the same formal structure as \eqref{eq:phi2}--\eqref{ic2}. 

\begin{theorem}
	\label{thm:inizstab_lin}
	Assume hypotheses \ref{ass:coeff}--\ref{ass:m}. Let $(\phib, \sigmab, \pb)$ be a solution of \eqref{eq:phi}--\eqref{ic} corresponding to a triple of initial data $(\phi_0, \sigma_0, p_0) \in \Iad$. Moreover, let $(Y,Z,P)$ be a solution of \eqref{eq:philin}--\eqref{iclin} corresponding to $(h,k,w) \in \VV$. 
	Assume also that 
    \[ \norm{(h,k,w)}_{\VV} \le \Qb, \]
    for some $\Qb \gs 0$ and call $L = L(\Qb) \gs 0$, $L_1 = L_1 (\Qb) \gs 0$ the minimal constants such that by Proposition \emph{\ref{prop:linearised}}
	\begin{equation}\label{def:LL1} \norm{(Y,Z,P)}_{\C0{\HH}} \le L, \quad \norm{(Y,Z;P)}_{\HT1{\HH}} \le L_1. \end{equation}
	Then, there exists a constant $Q_1 \gs 0$, depending only on the parameters of the system, such that we have the following conditional stability estimate for any $t \in [0,T]$
	\[ \norm{(Y(t), Z(t), P(t))}_{\HH} \le Q_1 L^{1-\lambda} \norm{(Y(T), Z(T), P(T))}^\lambda,  \]
	where $\lambda$ is either $\lambda_1(t)$ or $\lambda_2(t)$.
	
	Moreover, if we additionally assume that 
	\[ \eps := \frac{\norm{(Y(T), Z(T), P(T))}_{\HH}}{L} \le \exp \left\{ - \left( \min \left\{ 1, \frac{4\sqrt{3} L Q_1^{3/2}}{9L_1} \right\} \right)^{-1} \right\}, \]
	then there exists a constant $Q_2 \gs 0$ such that 
	\begin{equation}
		\label{eq:inizstab_lin}
		\norm{ (h, k, w) }^2_{\HH} \le \frac{Q_2}{\sqrt{\abs{\log \eps}}}, 
	\end{equation}
	where we can quantify the constant as 
	\begin{equation}\label{def:Q2} Q_2 = \frac{2L_1 L C_1^{1/2}}{\beta^{1/2}} + \frac{3L_1^2}{4\beta Q_1}. \end{equation}
\end{theorem}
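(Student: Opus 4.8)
The plan is to transcribe the two-stage argument already used for the nonlinear problem, since the linearised system \eqref{eq:philin}--\eqref{iclin} has precisely the abstract structure exploited in Proposition \ref{prop:condstab} and Theorem \ref{thm:inizstab}. First I would rewrite \eqref{eq:philin}--\eqref{iclin} in the compact notation $\vpsi := (Y,Z,P)$, $\vD := \diag(\lambda,\eta,D)$, as $\partial_t \vpsi - \vD \Delta \vpsi = f_{\vpsi}$, where the source collects the zeroth-order terms
\begin{align*}
f_Y &:= -F''(\phib)Y + m(\sigmab)\hh''(\phib)Y + m'(\sigmab)\hh'(\phib)Z, \\
f_Z &:= -\gamma_h Z - \gamma_{ch}\sigmab Y - \gamma_{ch}\phib Z + S_{ch}Y, \\
f_P &:= -\gamma_p P + \alpha_{ch}Y.
\end{align*}
The decisive observation is that every coefficient above lies in $\Lqt\infty$: the background $(\phib,\sigmab,\pb)$ is globally bounded by Theorem \ref{thm:wellposedness}, while $F''$, $\hh''$, $\hh'$, $m$, $m'$ are bounded by \ref{ass:Fh}--\ref{ass:m}. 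Hence, exactly as in \eqref{backuniq:fpsi}, one obtains $\norm{f_{\vpsi}}_{\HH}\le C\norm{\vpsi}_{\HH}$ with $C$ depending only on the parameters (the background bounds being uniform over $\Iad$), so the system here is formally identical to \eqref{eq:phi2}--\eqref{ic2}.

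With this estimate in hand, I would apply the logarithmic-convexity result \cite[Theorem 3.1.3]{isakov} verbatim, as in the proof of Proposition \ref{prop:condstab}. The hypothesis $(h,k,w)\in\VV$ yields, through Proposition \ref{prop:linearised}, the strong regularity $(Y,Z,P)\in\XX_0\times\XX\times\XX$, which is exactly what makes the testing identities \eqref{backuniq:test1}--\eqref{backuniq:test2} licit; these are the only ingredients the convexity argument needs. Since the operator $-\vD\Delta$ is unchanged and still has no skew-symmetric part, the differential inequality \eqref{l:diffineq} and the comparison argument of \cite[Lemmas 3.1.4--3.1.5]{isakov} go through unaltered, producing the conditional estimate with the same exponents $\lambda_1,\lambda_2$ of \eqref{lambda:choice} and the same $\gamma$. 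Using $\norm{\vpsi(0)}_{\HH}\le L$ from \eqref{def:LL1} then gives the first assertion with a constant $Q_1$ depending only on the parameters.

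For the logarithmic bound \eqref{eq:inizstab_lin}, I would repeat the computation of Theorem \ref{thm:inizstab} under the replacements $M\mapsto L$, $M_1\mapsto L_1$, $C_1\mapsto Q_1$, observing that now $\vpsi(0)=(h,k,w)$, so the estimate lands directly on $\norm{(h,k,w)}_{\HH}$. Concretely, applying the fundamental theorem of calculus to the absolutely continuous map $t\mapsto\norm{\vpsi(t)}^2_{\HH}$, then Cauchy--Schwarz together with the $\HT1{\HH}$-bound $L_1$ and the conditional estimate just proved, and finally the tangent-line bound $\eps^{\lambda_i(t)}\le\eps^{\beta t}$ with $\beta=\gamma/(e^{\gamma T}-1)$, reduces the problem to minimising over $t\in[0,T]$ the function
\[ g(t) = Q_1^2 L^2\eps^{2\beta t} + \frac{2L_1 L Q_1^{1/2}}{\beta^{1/2}\radice}\bigl(1-\eps^{\beta t}\bigr)^{1/2}. \]
Its minimiser $\bar x=\eps^{\beta\bar t}$ solves $x\sqrt{1-x}=\frac{L_1}{2\beta^{1/2}Q_1^{3/2}L\radice}$, which is solvable precisely under the stated smallness assumption on $\eps$ (as $\max_x x\sqrt{1-x}=2\sqrt3/9$); bracketing the increasing branch of $x\sqrt{1-x}$ between $y=x$ and $y=x/\sqrt3$ then yields \eqref{eq:inizstab_lin} with $Q_2$ as in \eqref{def:Q2}.

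I do not expect a genuine obstacle, as linearity turns the argument into a transcription of the nonlinear one. The only points deserving care are that the constant $C$ in $\norm{f_{\vpsi}}_{\HH}\le C\norm{\vpsi}_{\HH}$—and hence $\gamma$ and $Q_1$—must be independent of $(h,k,w)$, depending only on the uniform $\Lqt\infty$ bounds of the background over $\Iad$, and that the regularity $(Y,Z,P)\in\XX_0\times\XX\times\XX$ must hold so that the convexity identities are legitimate; these are supplied respectively by Theorem \ref{thm:wellposedness} and Proposition \ref{prop:linearised}. A minor bookkeeping subtlety is to keep the convexity exponent $\lambda(t)$ notationally distinct from the phase-field diffusivity $\lambda$ entering $\vD$, exactly as is already done in Proposition \ref{prop:condstab}.
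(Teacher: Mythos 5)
Your proposal is correct and takes essentially the same route as the paper: the paper's own proof likewise rewrites \eqref{eq:philin}--\eqref{iclin} in the abstract form $\partial_t \vy - \vD \Delta \vy = f_{\vy}$, uses the $\Lqt\infty$ bounds on the background $(\phib,\sigmab)$ to get $\norm{f_{\vy}}_{\HH} \le C \norm{\vy}_{\HH}$, and then applies \cite[Theorem 3.1.3]{isakov} followed by a verbatim repetition of the argument of Theorem \ref{thm:inizstab} with $(M, M_1, C_1)$ replaced by $(L, L_1, Q_1)$. The only (cosmetic) discrepancy is that the stated formula \eqref{def:Q2} for $Q_2$ retains a factor $C_1^{1/2}$ where your consistent substitution would produce $Q_1^{1/2}$.
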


\begin{remark}
	Since the system is linear, it is clear that \eqref{eq:inizstab_lin} gives a stability estimate for the linearised inverse problem of reconstructing the initial data $(h,k,w)$ from the final measurements $(Y(T), Z(T), P(T))$; indeed, the difference of two solutions still satisfies the same linear system.
\end{remark}

\begin{proof}
	Observe that we can rewrite the linearised system \eqref{eq:philin}--\eqref{iclin} in the abstract form:
	\[ \partial_t \vy - \vD \Delta \vy = f_{\vy}, \quad \vy(0)  = \vh,  \]
	where $\vy = (Y,Z,P)^\top$, $\vh = (h,k,w)^\top$ and
	\[ f_{\vy} = \begin{pmatrix}
		- F''(\phib) Y + m(\sigmab) h''(\phib) Y + m'(\sigmab) h'(\phib) Z \\
		- \gamma_h Z - \gamma_{ch} \sigmab Y - \gamma_{ch} \phib Z + S_{ch} Y \\
		- \gamma_p P + \alpha_{ch} Y
	\end{pmatrix}. \] 
	We recall that the Laplacian operator actually depends on the different boundary conditions.
	By using the fact that $\phib, \sigmab \in \Lqt\infty$ by Theorem \ref{thm:wellposedness}, it is easy to see that 
	\[ \norm{f_{\vy}}_{\HH} \le C \norm{\vy}_{\HH}. \]
	Therefore, we are exactly in the same situation to apply \cite[Theorem 3.1.3]{isakov} and then follow the same reasoning of the proof of Theorem \ref{thm:inizstab}. This concludes the proof. 
\end{proof}

We are now in the position to apply Lemma \ref{lem:vessella} and establish a Lipschitz stability estimate, when starting from a finite-dimensional subspace.

\begin{theorem}
	\label{thm:lipstab}
	Assume hypotheses \ref{ass:coeff}--\ref{ass:m}. Let $\Lambda$ be a finite-dimensional subspace of $\VV$ and $K \subseteq \Lambda$ such that 
	\[ K = \{ \vec{u} \in \Lambda \mid \norm{\vec{u}}_{\VV} \le \Cb \}, \]
	for some constant $\Cb \gs 0$ to be decided a priori. Assume further that 
	\[ (\phi_0, \sigma_0, p_0) \in K \cap \Iad. \]
	Then, there exists a constant $C_s \gs 0$ such that for any choice of $(\phi_0^1, \sigma_0^1, p_0^1)$ and $(\phi_0^2, \sigma_0^2, p_0^2)$ in $K \cap \Iad$ the following stability estimate holds
	\begin{equation}
		\label{eq:lipstab}
		\norm{ (\phi_0^1, \sigma_0^1, p_0^1) - (\phi_0^2, \sigma_0^2, p_0^2) }_{\HH} \le C_s \norm{ (\phi_1(T), \sigma_1(T), p_1(T)) - (\phi_2(T), \sigma_2(T), p_2(T)) }_{\HH},
	\end{equation}
	where the constant $C_s$ can be quantified as 
	\[ C_s = \max \left\{ \frac{2 \Cb}{M} e^{ \frac{16 C_0^2 C_2}{m_0}}, \frac{2}{m_0} \right\}, \text{ with } m_0 = \frac{L}{C_{\Lambda}} e^{-Q^2_2} \text{ and } C_{\Lambda} = \sup_{\vh \in \Lambda \setminus \{0\}} \frac{\norm{\vh}_{\VV}}{\norm{\vh}_{\HH}}, \]
	where $C_0$ is the Lipschitz constant of the Fr\'echet-derivative, given by \eqref{lipconst:frechet}, and $M$, $L$, and $Q_2$ are defined in \eqref{bound:c0h}, \eqref{def:LL1}, \eqref{def:Q2}.
\end{theorem}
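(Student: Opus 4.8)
The plan is to apply the quantitative inverse-function estimate of Lemma \ref{lem:vessella} with Banach space $\mathcal{B} = \HH$ and operator $G = \Rcal$, after identifying the finite-dimensional subspace $\Lambda \subseteq \VV$ with $\R^n$, $n = \dim \Lambda$, endowed with the $\HH$-norm; all norms being equivalent on $\Lambda$, the relevant equivalence constant is $C_\Lambda = \sup_{\vh \in \Lambda \setminus \{0\}} \norm{\vh}_\VV/\norm{\vh}_\HH \ge 1$. Since $\norm{\cdot}_\HH \le \norm{\cdot}_\VV$, the set $K$ satisfies $K \subseteq B_{\Cb}(0)$ in the $\HH$-norm, so the role of $R$ in the lemma is played by $\Cb$; the neighbourhood assumption \eqref{eq:ass_finitedim} is met by choosing $r_0$ small enough that the $r_0$-balls around points of $K \cap \Iad$ remain inside the open set $\mathcal{I}_R$ on which $\Rcal$ and its derivative are controlled. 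It then remains to verify the five hypotheses $(i)$--$(v)$, each of which is supplied by a result established above.

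Hypotheses $(i)$--$(iv)$ are essentially bookkeeping. Injectivity of $\Rcal$ on $K \cap \Iad$ is exactly the backward-uniqueness statement of Corollary \ref{prop:backuniq}. For $(ii)$, the logarithmic estimate \eqref{eq:inizstab} of Theorem \ref{thm:inizstab}, valid since every element of $K$ obeys $\norm{\cdot}_\VV \le \Cb$, shows that $(\Rcal_{\mid K})^{-1}$ is uniformly continuous with a logarithmic modulus $\omega_0$ of the form $\omega_0(d) \sim C_2^{1/2}\,\abs{\log (d/M)}^{-1/4}$ for small $d$, the trivial bound from $K \subseteq B_{\Cb}(0)$ handling $d$ bounded away from $0$. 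Hypothesis $(iii)$ is Theorem \ref{thm:frechet}, and $(iv)$ follows from the Lipschitz estimate \eqref{lipconst:frechet} for $\D\Rcal$: restricting to $\Lambda$ and converting the $\VV$-operator norm into the $\HH$-operator norm costs a factor $C_\Lambda$, so $\D\Rcal$ admits a linear modulus $\omega_1(s) = C_\Lambda C_0^{1/2}\, s$.

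The main obstacle is hypothesis $(v)$, the uniform lower bound $\inf_{x \in K,\ \norm{y}_\HH = 1} \norm{\D\Rcal(x)[y]}_\HH \ge m_0 > 0$. This is where the ill-posedness must be tamed, and where finite-dimensionality is essential. Here I would fix $x \in K \cap \Iad$ and $y = \vh \in \Lambda$ with $\norm{y}_\HH = 1$; since $\norm{y}_\VV \le C_\Lambda$, the linearised solution $(Y,Z,P) = \D\Rcal(x)[y]$ obeys the bounds \eqref{def:LL1} with $\Qb = C_\Lambda$, so Theorem \ref{thm:inizstab_lin} applies. Reading its estimate \eqref{eq:inizstab_lin} backwards, from $1 = \norm{y}_\HH^2 \le Q_2/\sqrt{\abs{\log \eps}}$ one gets $\abs{\log \eps} \le Q_2^2$, hence $\eps \ge e^{-Q_2^2}$ and therefore $\norm{\D\Rcal(x)[y]}_\HH = L\,\eps \ge L\,e^{-Q_2^2}$; the norm conversion between $\HH$ and $\VV$ on $\Lambda$ contributes the factor $C_\Lambda$, producing $m_0 = (L/C_\Lambda)\,e^{-Q_2^2}$ as in the statement. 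The crucial structural point is that $\D\Rcal(x)$ is the solution operator of the linearised backward problem, which on all of $\VV$ is itself severely ill-posed and admits no positive lower bound; restricting the competitor directions $y$ to the finite-dimensional $\Lambda$ and exploiting the equivalence constant $C_\Lambda$ is precisely what converts the linearised conditional stability of Theorem \ref{thm:inizstab_lin} into a genuine, uniform $m_0 > 0$.

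With $(i)$--$(v)$ in hand, estimate \eqref{eq:lipstab} is the conclusion of Lemma \ref{lem:vessella}, and the explicit form of $C_s$ follows by substituting the above data into $C = \max\{2R/(\omega_0^{-1})_*(\delta_1),\, 2/m_0\}$: one computes $\delta_0 = (\omega_1^{-1})_*(m_0/2)$ from the linear modulus $\omega_1$, sets $\delta_1 = \tfrac12\min\{\delta_0,r_0\}$, and inverts the logarithmic modulus $\omega_0$ at $\delta_1$, which produces an exponential factor of the form $e^{16 C_0^2 C_2/m_0}$; this yields $C_s = \max\{\frac{2\Cb}{M}e^{16 C_0^2 C_2/m_0}, \frac{2}{m_0}\}$ with $m_0 = (L/C_\Lambda)e^{-Q_2^2}$. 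I would close by remarking that, since $m_0$ is exponentially small in $Q_2^2$ (and $Q_2$ itself degrades with $T$ and with $\dim \Lambda$ through $C_\Lambda$), the Lipschitz constant $C_s$ blows up accordingly, in agreement with the expected severe ill-posedness of the backward problem.
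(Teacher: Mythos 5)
Your proposal is correct and follows essentially the same route as the paper: both apply Lemma \ref{lem:vessella} to $\Rcal$ restricted to the finite-dimensional $\Lambda$, verifying $(i)$ via Corollary \ref{prop:backuniq}, $(ii)$ via Theorem \ref{thm:inizstab}, $(iii)$--$(iv)$ via Theorem \ref{thm:frechet}, and the crucial lower bound $(v)$ by inverting the linearised logarithmic estimate \eqref{eq:inizstab_lin} of Theorem \ref{thm:inizstab_lin} on the $\HH$-unit sphere of $\Lambda$, with the norm-equivalence constant $C_\Lambda$ yielding $m_0 = (L/C_\Lambda)e^{-Q_2^2}$. Your bookkeeping of the moduli (the fourth-root form of $\omega_0$ and the square-root Lipschitz constant, reflecting that \eqref{eq:inizstab} and \eqref{lipconst:frechet} bound squared norms) is if anything slightly more careful than the paper's.
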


\begin{proof}
	We want to apply Lemma \ref{lem:vessella}, so we have to verify its hypotheses. 
    Indeed, we set $X = \VV$ and $\Lambda$ as above. Being a finite-dimensional subspace of $\VV$ means that $\Lambda = \R^n$ in the language of Lemma \ref{lem:vessella}, therefore we can immediately see that $K$ satisfies hypothesis \eqref{eq:ass_finitedim}.
	We consider the operator $\Rcal: \Lambda \subseteq \VV \to \HH$ defined as $\Rcal((\phi_0, \sigma_0, p_0)) = (\phi(T), \sigma(T), p(T))$. 
	Hypothesis $(i)$ on the injectivity of $\Rcal_{\mid K}$ follows directly from Corollary \ref{prop:backuniq}. 
	Also hypothesis $(ii)$ follows immediately from Theorem \ref{thm:inizstab}, with modulus of continuity $\omega_0(r) = C_2/\sqrt{\abs{\log(r/M)}}$. 
	Then, hypotheses $(iii)$ and $(iv)$ on Fr\'echet-differentiability follow from Theorem \ref{thm:frechet}, with modulus of continuity $\omega_1(r) = C_0 r$. Finally, we have to check condition $(v)$. 
	To do this, notice that for any $(\phi_0, \sigma_0, p_0) \in K$ and any $(h,k,w) \in \Lambda$, we have that $\D\Rcal (\phi_0, \sigma_0, p_0) [h,k,w] = (Y(T), Z(T), P(T))$, where $(Y,Z,T)$ is the solution to \eqref{eq:philin}--\eqref{iclin}. 
	Now, by Theorem \ref{thm:inizstab_lin}, we can use \eqref{eq:inizstab_lin} to say that, if $\norm{(h,k,w)}_{\VV} \le \Qb$, it holds that
	\[ \norm{(h,k,w)}^2_{\HH} \le \frac{Q_2}{\sqrt{\abs*{ \log \frac{\norm{(Y(T), Z(T), P(T))}_{\HH}}{L} }}}, \]
	which implies that 
	\[ \norm{(Y(T), Z(T), P(T))}_{\HH} \ge L e^{- \frac{Q_2^2}{\norm{(h,k,w)}^4_{\HH}}}. \]
	Hence, we can verify condition $(v)$ as
	\begin{align*}
		& \inf_{\substack{(\phi_0, \sigma_0, p_0) \in K \\ (h,k,w) \in \Lambda \setminus \{0\}}} \frac{\norm{(Y(T), Z(T), P(T))}_{\HH}}{\norm{(h,k,w)}_{\VV}} 
		\ge \inf_{\substack{(\phi_0, \sigma_0, p_0) \in K \\ (h,k,w) \in \Lambda \setminus \{0\}}} \frac{\norm{(Y(T), Z(T), P(T))}_{\HH}}{C_{\Lambda} \norm{(h,k,w)}_{\HH}} \\
		& \quad = \inf_{\substack{(\phi_0, \sigma_0, p_0) \in K \\ \norm{(h,k,w)}_{\HH}=1 }} \frac{\norm{(Y(T), Z(T), P(T))}_{\HH}}{C_{\Lambda} \norm{(h,k,w)}_{\HH}}
		\ge \frac{L}{C_{\Lambda}} e^{-Q_2^2}, 
	\end{align*}
	where we used the linearity of $\D\Rcal$ and, since $\Lambda$ is finite-dimensional and thus all norms on it are equivalent, we estimated $\norm{(h,k,w)}_{\VV} \le C_{\Lambda} \norm{(h,k,w)}_{\HH}$.
	
	Then, we can apply Lemma \ref{lem:vessella} and conclude the proof of Theorem \ref{thm:lipstab}. 
\end{proof}

\begin{remark}
    We would like to point out that the constant $C_s$ blows up exponentially as the dimension of $\Lambda$ goes to infinity. 
    Indeed, as one can see from its expression, $C_s$ has a direct exponential proportionality to the constant $C_\Lambda$, which is finite only if $\Lambda$ has a finite dimension and blows up as it becomes larger. 
    Additionally, one can also notice that the dependence of $C_s$ on the final time is even worse, namely $\log(C_s)$ depends exponentially on $T$. 
    This means that, even if we have some kind of Lipschitz stability, one has to be very careful when designing numerical algorithms to approximate the solution.
\end{remark}

\section{Conclusions}
\label{sec:conclusion}

In this paper we consider a phase field model for prostate cancer growth previously introduced and studied in \cite{CGLMRR2019, CGLMRR2021}. 
While improving previous well-posedness results, we rigorously analyse an inverse problem aiming to reconstruct an earlier tumour state starting from a single measurement at a later time. 
We first derive a conditional logarithmic stability estimate and we subsequently prove that the solution to the inverse problem is unique.
Furthermore, by assuming to know a priori that the solution lies in a finite-dimensional subspace, we are also able to obtain an optimal Lipschitz stability estimate, as in Theorem \ref{thm:lipstab}.  
A stability estimate of this kind generally opens the possibility of using iterative reconstruction algorithms, such as the Landweber scheme, to approximate numerically the solution to the inverse problem. 
Indeed, under the validity of a Lipschitz stability estimate and the Fr\'echet-differentiability of the forward map (or more generally of the so-called tangential cone condition), it is shown that such iterative methods are locally convergent \cite{DQS2012, HNS1995, kaltenbacher:neubauer:scherzer, HR2017}.
% Therefore, they should provide faithful reconstructions of the initial data, at least if the final time $T$ is small.
Therefore, they should provide faithful reconstructions of the earlier tumour state (i.e., the initial conditions of the forward problem), at least if the final time $T$ is small.
On the other hand, due to the severe ill-posedness of the non-linear inverse problem, as shown in Theorem \ref{thm:lipstab}, the stability constants become doubly exponentially large with $T$.
This means that numerical analysis of this problem for large final times $T$ is extremely challenging.
%However, in our specific cancer growth application, it is important to be able to efficiently treat even the case of longer time horizons, to better understand how exactly the tumour evolved to the situation depicted in the medical images. 
However, in our specific cancer growth application, it is important to be able to efficiently treat even the case of long time horizons, to better understand how the tumour has developed from a small mass to the situation observed in the medical images at diagnosis.
This information on tumour origin and early dynamics holds important value to estimate prognosis and survival, anticipate potential complications derived from disease progression, and contribute to the planning of effective treatments for each individual patient \cite{Lorenzo2022_review,chaudhuri2023predictive,eyupoglu2013surgical,jungk2019location,barrett2019pi,ali2024tale}.
For this reason, the objective of a future part of our project is to carry out an in-depth simulation study to validate our analytical findings and propose efficient reconstruction methods.
%\matteo{Add something else and maybe rewrite better!}
%\guille{Agreed, I think 2-3 lines summarizing main objective and findings at the beginning of the section would be great.}

\bigskip
	
\noindent\textbf{Acknowledgements.}
C. Cavaterra, M. Fornoni and E. Rocca have been partially supported by the MIUR-PRIN Grant 2020F3NCPX ``Mathematics for industry 4.0 (Math4I4)''. 
C. Cavaterra has been partially supported by the MIUR-PRIN Grant 2022  
	``Partial differential equations and related geometric-functional inequalities''. 
 E. Rocca also acknowledges the support of Next Generation EU Project No.P2022Z7ZAJ (A unitary mathematical framework for modelling muscular dystrophies).
The research of E. Beretta has been supported by the Project AD364 - Fund of NYU Abu Dhabi.
 The research of C. Cavaterra is part of the activities of ``Dipartimento di Eccellenza 2023-2027'' of Universit\`a degli Studi di Milano.
C. Cavaterra, M. Fornoni and E. Rocca are members of  
	GNAMPA (Gruppo Nazionale per l'Analisi Matematica, la Probabilit\`a e le loro Applicazioni)
	of INdAM (Istituto Nazionale di Alta Matematica).
G. Lorenzo acknowledges the support of a fellowship from ”la Caixa” Foundation (ID 100010434). The fellowship code is LCF/BQ/PI23/11970033. 
%We thank the Texas Advanced Computing Center (TACC) for high-performance computational resources that contributed to the results presented herein. 

% Bibliografia

%\newpage
\footnotesize
%\bibliographystyle{amsplain0}
%\bibliography{BibliografiaInverse.bib}

\end{document}